\newcommand{\cut}{\ensuremath{\mathrm{cut}}}
\newcommand{\argmin}{\ensuremath{\mathrm{argmin\,}}}
\newcommand{\argmax}{\ensuremath{\mathrm{argmax\,}}}
\newcommand{\ah}{\ensuremath{h_{\mathrm{anti}}}}
\renewcommand{\vec}[1]{\mbox{\boldmath$#1$}}
\newcommand{\vol}{\ensuremath{\mathrm{vol}}}
\newcommand{\sign}{\ensuremath{\mathrm{sign}}}
\newcommand{\sgn}{\ensuremath{\mathrm{Sgn}}}
\newcommand{\norm}[1]{\Vert #1\Vert_\infty}
\newcommand{\median}{\ensuremath{\mathrm{median}}}
\newcommand{\set}[1]{\{ #1\}}
\newcommand{\blue}{\textcolor{black}}
\begin{document}
 \title{Continuous iterative algorithms for anti-Cheeger cut\thanks{Received date, and accepted date (The correct dates will be entered by the editor).}}


          \author{Sihong Shao\thanks{CAPT, LMAM and School of Mathematical Sciences,  Peking University, Beijing 100871, China, (sihong@math.pku.edu.cn).}
          \and Chuan Yang \thanks{School of Mathematical Sciences,  Peking University, Beijing 100871, China, (chuanyang@pku.edu.cn).}}

         \pagestyle{myheadings} \markboth{CONTINUOUS ITERATIVE ALGORITHMS FOR ANTI-CHEEGER CUT}{S. SHAO AND C. YANG} \maketitle

          \begin{abstract}
               As a judicious correspondence to the classical maxcut, 
               the anti-Cheeger cut has more balanced structure, 
               but few numerical results on it have been reported so far.  
               In this paper, we propose a continuous iterative algorithm (CIA) for the anti-Cheeger cut problem through fully using an equivalent continuous formulation. It does not need rounding at all and has advantages that all subproblems have explicit analytic solutions, the objective function values are monotonically updated and the iteration points converge to a local optimum in finite steps via an appropriate subgradient selection. It can also be easily combined with the maxcut iterations for breaking out of local optima and improving the solution quality thanks to the similarity between the anti-Cheeger cut problem and the maxcut problem. The performance of CIAs is fully demonstrated through numerical experiments on G-set from two aspects: one is on the solution quality where we find that the approximate solutions obtained by CIAs are of comparable quality to those by the multiple search operator heuristic method; the other is on the computational cost where we show that CIAs always run faster than the often-used continuous iterative algorithm based on the rank-two relaxation.
          \end{abstract}
\begin{keywords}  Anti-Cheeger cut; Maxcut; Iterative algorithm; Subgradient selection; Fractional programming
\end{keywords}

 \begin{AMS}  90C27; 05C85; 65K10; 90C26; 90C32
\end{AMS}

\section{Introduction}
\label{intro}

\begin{figure}
	\centering
	\includegraphics[scale=0.25]{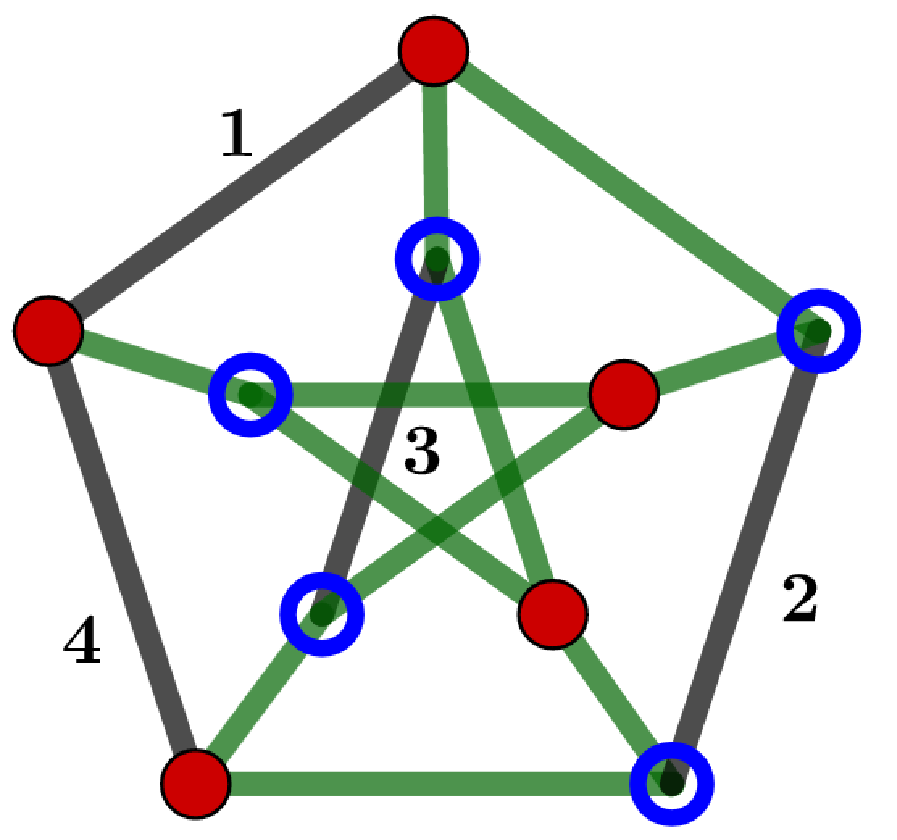}
	\quad
	\includegraphics[scale=0.25]{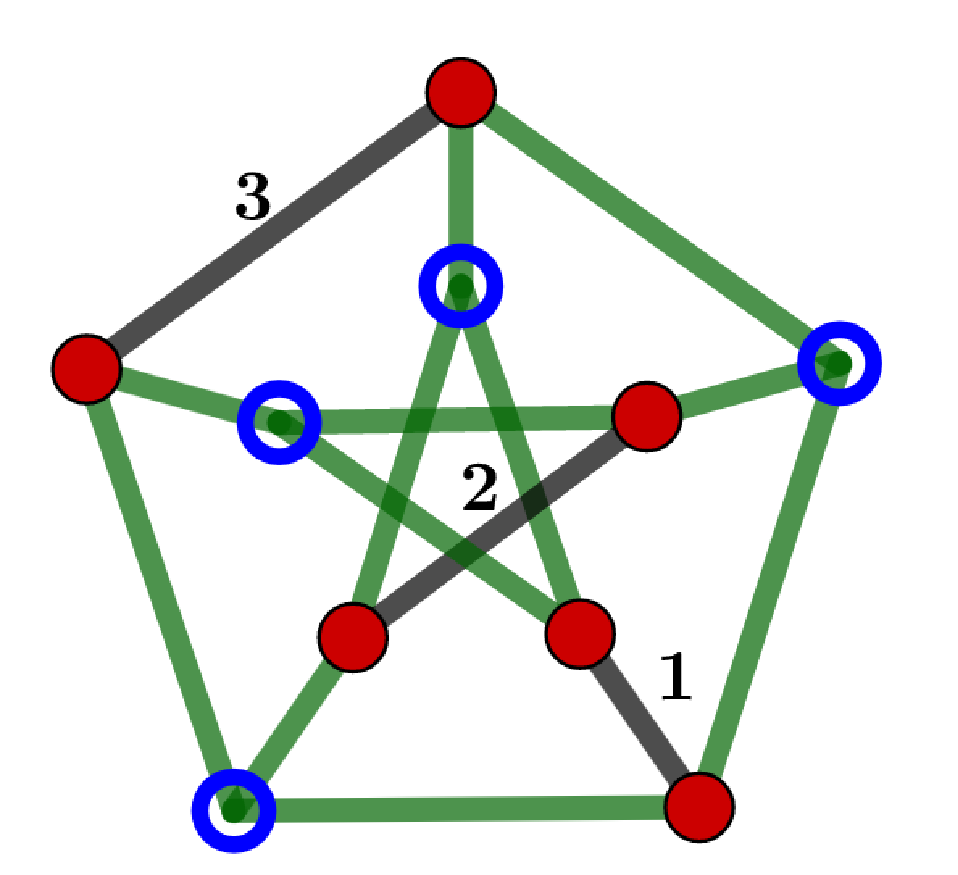}
	\caption{\small Anti-Cheeger cut (left) {\it v.s.} maxcut (right) on the Petersen graph: A bipartition $(S, S^\prime)$ of the vertex set $V$
		is displayed in red bullets ($S$) and blue circles ($S^\prime$).  For the anti-Cheeger cut, we have $\cut(S) = 11$,
		$\vol(S) = \vol(S^\prime)=15$, $\ah(G) = \frac{11}{15}$,
		and the remaining four uncut edges are divided equally in $S$ and $S^\prime$ ({\it judicious!}). 
		For the maxcut, we have $\cut(S) = 12$, $\vol(V) = 30$, $h_{\max}(G) = \frac{12}{15}$,
		but the remaining three uncut edges are all contained in $S$ ({\it biased!}).}
	\label{fig1}
\end{figure}

Given an undirected simple graph $G=(V,E)$ of order $n$ with the vertex
set $V$ and the edge set $E$,
a set pair $(S,S^\prime)$ with $S, S^\prime \not\in\{\emptyset,V\}$ is called a cut of $G$ if
$S\cap S^\prime = \varnothing$ and $S \cup S^\prime = V$ (i.e., $S^\prime = S^c$, the complementary set of $S$ in $V$),
and its cut value reads
\begin{equation}\label{eq:cut}
	\cut(S)=\sum\limits_{\{i,j\}\in E(S,S^\prime)}w_{ij},
\end{equation}
where $E(S,S^\prime)$ collects all edges cross between $S$ and $S^\prime$ in $E$, 
and  $w_{ij}$ denotes the nonnegative weight on the edge $\{i,j\}\in E$. 
In this work, we will focus on the following anti-Cheeger cut problem~\cite{Xu2016}:
\begin{equation} \label{eq:antiCheeger}
	\ah(G)=\max_{S\subset V}\frac{\cut(S)}{\max\{\vol(S),\vol(S^c)\}}.
\end{equation}
where $\vol(A) = \sum_{i\in A}d_i$ for $A\in\{S, S^c\}$ and $d_i=\sum_{\{i,j\}\in E}w_{ij}$.
Regarded as the corresponding judicious version, the anti-Cheeger cut~\eqref{eq:antiCheeger} may fix the biasness of the maxcut problem \cite{Karp1972,CSZZ2018}:
\begin{equation} \label{eq:maxcut}
	h_{\max}(G) = \max_{S\subset V}\frac{\cut(S)}{\frac12 \vol(V)},
\end{equation}
through an extra requirement that both $\vol(S)$ and $\vol(S^\prime)$ can not be too large.  
This can be readily observed on the Petersen graph as shown in Fig.~\ref{fig1},
and also implies that the anti-Cheeger cut may be harder than \blue{the} maxcut. 
Actually, the NP-hardness for the anti-Cheeger cut can be reduced from that for \blue{the} maxcut \cite{Karp1972}.
According to
$\vol(V) = \vol(S)+\vol(S^\prime)$ for any cut $(S,S^\prime)$ of $G$, we have 
\begin{equation} \label{eq:ah_max}
	\ah(G)\leq h_{\max}(G).
\end{equation}
In addition to the upper bound, it can be noted that a trivial lower bound is $\ah(G)\geq \frac{1}{2}$
which can be easily achieved by maximizing the cut value of any given cut in a greedy manner.  
Specifically, the resulting cut $(S,S^c)$ satisfies $|E(S,S)|\geq |E(S^c,S^c)|$; 
$\forall\, v\in S$, $|E(\{v\},S^c)|\geq |E(\{v\}, S)|$, and then 
summing over all the vertices in $S$ leads to $\cut(S)\geq \frac{1}{2} \max\{\vol(S),\vol(S^c)\}$.

Recently, it has been shown that  a set-pair  Lov\'asz extension produces some kinds of equivalent continuous optimization formulations for graph cut problems \cite{CSZZ2018}. The anti-Cheeger cut and \blue{the} maxcut~\eqref{eq:antiCheeger} can be respectively rewritten into
\begin{align} 
	\ah(G) &=\max_{\vec x\in\mathbb{R}^n\setminus \{\vec 0\}}\frac{I(\vec x)}{2\vol(V)\|\vec x\|_\infty - N(\vec x)}, \label{eq:antiCheeger-I(x)} \\
	h_{\max}(G) &=\max_{\vec x\in\mathbb{R}^n\setminus \{\vec 0\}}\frac{I(\vec x)}{\vol(V)\norm{\vec x}}, \label{eq:maxcut-continuous}
\end{align}
where
\begin{align}
	I(\vec x)&=\sum_{\{i,j\}\in E}w_{ij}|x_i-x_j|, \\
	N(\vec x)&=\min_{c\in\mathbb{R}}\sum_{i=1}^nd_i|x_i-c|,\\
	\|\vec x\|_\infty & =\max\{|x_1|,\ldots,|x_n|\}. \label{eq:inf}
\end{align}
Comparing Eq.~\eqref{eq:antiCheeger-I(x)}
with Eq.~\eqref{eq:maxcut-continuous}, 
it can be easily observed that the bias of \blue{the} maxcut is fixed by $N(\vec x)$ in the anti-Cheeger cut using 
the fact that 
\begin{equation}\label{fact}
	\alpha=\argmin_{c\in\mathbb{R}}\sum_{i=1}^n d_i|x_i-c|  ~~~ \text{ if and only if } ~~~ \alpha\in \median(\vec x),
\end{equation}
where $\median(\vec x)$ gives a range of the middle volume based on the order of $\vec x$ (see Definition 1.3 and Lemma 2.3 in \cite{Chang2015}). We can also derive from the above fact that $N(\vec x)\le \vol(V)||\vec x||_{\infty}$ and thus the objective function in Eq.~\eqref{eq:antiCheeger-I(x)} is positive. Actually,
if the equality holds in Eq.~\eqref{eq:ah_max}, then there exists at least one maxcut, denoted by ${S_*}$ in Eq.~ \eqref{eq:maxcut} or $\vec x_*$ in Eq.~\eqref{eq:maxcut-continuous},  satisfying $N(\vec x_*) = \vol(V)||\vec x_*||_{\infty}$ which gives nothing but the perfect balance: $\vol(S_*)=\vol(S_*^c)=\frac12\vol(V)$.  Just starting from the equivalent continuous formulation~\eqref{eq:maxcut-continuous}, a simple iterative (SI) algorithm was proposed for solving the maxcut problem \cite{SZZ2018}. The numerical maxcut solutions obtained by SI on G-set 
are of comparable quality to those by \blue{the} advanced heuristic combinational algorithms and 
have the best cut values among all existing continuous algorithms.
In view of the similarity between Eqs.~\eqref{eq:antiCheeger-I(x)} and \eqref{eq:maxcut-continuous},
it is then legitimate to ask whether we are able to get the same good performance when a similar idea to SI is applied into the former. We will give an affirmative answer in this work.

The first step to develop a continuous iterative algorithm (CIA) from the fractional form \eqref{eq:antiCheeger-I(x)} is to use the idea of Dinkelbach's iteration \cite{Dinkelbach1967},
with which we can equivalently convert the quotient into a difference of two convex functions. 
By linearizing the function to be subtracted (see $Q_r(\vec x)$ in Eq.~\eqref{eq:Qr}) via its subgradient, 
it is further relaxed to a convex subproblem,
the solution of which has an analytical expression.
That is, we are able to still have a kind of SI algorithm for the anti-Cheeger cut.
However, the selection of \blue{a} subgradient becomes more complicated than that for the maxcut
because $Q_r(\vec x)$ involves not only $I(\vec x)$ for calculating the cut values, which is also exactly the function to be subtracted in the maxcut problem~\eqref{eq:maxcut-continuous}, but also $N(\vec x)$ for measuring the judiciousness and the objective function value $r$.  
We cannot expect a satisfactory performance along with an arbitrary subgradient in $\partial Q_r(\vec x)$.
For instance, 
\textbf{CIA0}, our first iterative algorithm, constructs the subgradient in $\partial Q_r(\vec x)$
with another two subgradients, one chosen from $\partial I(\vec x)$ and the other $\partial N(\vec x)$, which are independent of each other, and produces numerical solutions with poor quality.
When the results obtained with the multiple search operator heuristic method (MOH) \cite{mahao2017} on G-set are set to be the reference,
the numerical lower bound of the ratios between the \blue{objective} function values achieved by \textbf{CIA0} and the reference ones are only about $0.870$. To improve it, a careful subgradient selection is then designed and the resulting iterative algorithm is named \textbf{CIA1}, which increases the above numerical lower bound to $0.942$. We also show that \textbf{CIA1} may get enough increase at each step (see Theorem~\ref{prop:S123}) and converges to a local optimum (see Theorem~\ref{thm:conver_3}). 

In order to make further improvement in solution quality by jumping out of local optima of \textbf{CIA1},  
we continue to exploit the similarity between Eqs.~\eqref{eq:antiCheeger-I(x)} and \eqref{eq:maxcut-continuous},
which shares a common numerator $I(\vec x)$ for the cut values. 
Specifically,  once \textbf{CIA1} cannot increase the objective function value for some successive iterative steps,
we switch to the SI iterations for the maxcut and denote the resulting algorithm by \textbf{CIA2}. Coupled with some 
additional simple random perturbations, numerical experiments show that the ratios between the \blue{objective} function values and the reference ones can be increased to at least {$0.994$}. Finally, in order to show the computational efficiency, we make a detailed comparison between CIAs and the often-used continuous iterative algorithm based on the rank-two relaxation (termed CirCut) \cite{burer2002rank},
and find that the former run much faster than the latter.

The paper is organized as follows. Section~\ref{sec:iter-scheme} presents three continuous iterative algorithms:
\textbf{CIA0}, \textbf{CIA1} and \textbf{CIA2}. Local convergence of \textbf{CIA1} is established in Section~\ref{sec:loc}.
Numerical experiments on G-set are performed in Section~\ref{sec:num}. \blue{Concluding remarks are given} in Section \ref{sec:conclusion}. 
It should be pointed out that both MOH and CirCut are originally designed for the maxcut problem~\eqref{eq:maxcut} and this work modifies them to generate approximate reference solutions for the anti-Cheeger cut problem~\eqref{eq:antiCheeger}. The descriptions for MOH and CirCut are presented in Appendix \ref{app:moh} and Appendix \ref{app:rank2}, respectively.

\section{Iterative algorithms}
\label{sec:iter-scheme}

Let $F(\vec x)$ denote the continuous, positive and fractional objective function of the anti-Cheeger cut problem~\eqref{eq:antiCheeger-I(x)}. Considering the zeroth order homogeneousness of $F(\vec x)$,  i.e., $F(\lambda\vec x) \equiv F(\vec x)$ holds for \blue{an} arbitrary $\lambda>0$,
it suffices to search for a maximizer of $F(\vec x)$ on a closed,  bounded and connected set $\Omega:=\{\vec x\in\mathbb{R}^n\big | \|\vec x\|_p= 1\}$, which can be readily achieved by the the following Dinkelbach iteration~\cite{Dinkelbach1967,SZZ2018}
\begin{subequations}
	\label{iter0}
	\begin{numcases}
		\vec x^{k+1}={\mathop{\argmin}\limits_{\|\vec x\|_p= 1} \{r^k \|\vec x\|_{\infty}-Q_{r^k}(\vec x)\}}, \label{iter0-1}\\
		r^{k+1}=F(\vec x^{k+1}),
	\end{numcases}
\end{subequations}
where 
\begin{align}
	Q_r(\vec x)&=\frac{I(\vec x)+rN(\vec x)}{2\vol(V)}, \quad r \ge 0, \label{eq:Qr}\\
	\|\vec x\|_p &= (|x_1|^p+|x_2|^p+\cdots+|x_n|^p)^{\frac{1}{p}}, \quad p\ge 1. 
\end{align}
Here we only need the intersection of $\Omega$ and any ray out from the original point 
is not empty and thus the choice of $p$ does not matter.


For any vector $\vec x^*$ reaching the maximum of $F(\vec x)$, there exists $\hat{\vec x}\in M(\vec x^*)$ such that 
\begin{equation}
	\frac{\hat{\vec x}}{||\hat{\vec x}||_{\infty}}=\vec 1_{S^*}-\vec 1_{(S^*)^c}
\end{equation}
also maximizes $F(\vec x)$ on $\vec x\in\mathbb{R}^n\setminus \{\vec 0\}$ thanks to the zeroth order homogeneousness of $F(\vec x)$,
where $S^*$ satisfies $S^+(\vec x^*) \subset S^* \subset (S^-(\vec x^*))^c$. Therefore, $(S^*,(S^*)^c)$ is an anti-Cheeger cut for graph $G$. Here $M(\vec x^*)=\{\vec x \big| ||\vec x||_{\infty}=||\vec x^*||_{\infty},S^\pm(\vec x^*)\subseteq S^\pm(\vec x)\}$, and $S^\pm(\vec x)=\{i\in V \big| x_i=\pm ||\vec x||_{\infty}\}$.

However, Eq.~\eqref{iter0-1} is still a non-convex problem and can be further relaxed to a convex one by using the fact that, 
for any convex and first degree homogeneous function $f(\vec x)$, we have 
\begin{equation}\label{eq:rex}
	f (\vec x)\geq  (\vec x, \vec s), \,\,\, \forall\, \vec s\in\partial f (\vec y), \,\,\, \forall\, \vec x, \vec y \in \mathbb{R}^n,
\end{equation}
where $(\cdot,\cdot)$ denotes the standard inner product in $\mathbb{R}^n$ and 
$\partial f (\vec y)$ gives the subgradient of function $f$ at $\vec y$.

Using the relaxation \eqref{eq:rex} in Eq.~\eqref{iter0-1} in view of that $Q_r(\vec x)$
is convex and homogeneous of degree one, 
we then modify the two-step Dinkelbach iterative scheme~\eqref{iter0} into 
the following three-step one
\begin{subequations}
	\label{iter1}
	\begin{numcases}{}
		\vec x^{k+1}=\mathop{\argmin}\limits_{\|\vec x\|_p= 1} \{r^k \|\vec x\|_{\infty} - (\vec x,\vec s^k)\}, 
		\label{eq:twostep_x2}
		\\
		r^{k+1}=F(\vec x^{k+1}),
		\label{eq:twostep_r2}
		\\
		\vec s^{k+1}\in\partial Q_{r^{k+1}}(\vec x^{k+1}),
		\label{eq:twostep_s2}
	\end{numcases}
\end{subequations}
with an initial data: 
$\vec x^0\in \mathbb{R}^n\setminus \{\vec0\}$, $r^0 = F(\vec x^0)$ and $\vec s^0\in\partial (Q_{r^0}(\vec x^0))$.

The iterative algorithm~\eqref{iter1} is called to be {\it simple} in \cite{SZZ2018} because the inner subproblem \eqref{eq:twostep_x2} has an analytical solution. That is, no any other optimization solver is needed in such simple three-step iteration. For the sake of completeness and the selection of \blue{a} subgradient in Eq.~\eqref{eq:twostep_s2}, we will write down the expression and some properties of the analytical solution and leave out all the proof details which can be found in \cite{SZZ2018}.

\begin{proposition}[Lemma 3.1 and Theorem 3.5 in \cite{SZZ2018}]
	\label{prop:rs}
	Suppose $\vec x^k$, $r^k$ and $\vec s^k$  are generated by the simple iteration \eqref{iter1}.
	Then for $k\geq 1$, we always have $0\le r^{k-1} \le r^k\leq \Vert \vec s^k\Vert_1$. In particular, 
	(1) $r^k= \Vert \vec s^k\Vert_1$ if and only if $\vec x^k/\norm{\vec x^k}\in \sgn(\vec s^k)$;
	and (2) $r^k<\Vert \vec s^k\Vert_1$ if and only if $L(r^k,\vec s^k)<0$. Here
	\begin{equation}\label{eq:L}
		{L}(r,\vec v) = \min_{\|\vec x\|_p= 1}\{ r \norm{\vec x} -(\vec x,\vec v)\}, 
		\,\,\,r\in\mathbb{R},\,\, \vec v\in\mathbb{R}^n, 
	\end{equation}
	denotes the minimal value of \eqref{eq:twostep_x2} and 
	$\sgn(\vec x) =\{(s_1, s_2,\ldots,s_n) \big| s_i\in \sgn(x_i),$ $i = 1,2,\ldots,n\}$
	with 
	\begin{equation}\label{sgnt}
		\sgn (t)=\begin{cases}
			\{1\}, &\;\text{if }t>0,\\
			\{-1\},&\;\text{if }t<0,\\
			[-1,1],&\;\text{if }t=0.
		\end{cases}
	\end{equation}
\end{proposition}

\begin{proposition}[Theorem 3.3 and Corollary 3.4 in \cite{SZZ2018}]
	\label{Thm:exact_solution}
	Let $\vec x^*$ be the solution of the minimization problem: 
	\[
	\vec x^*=\argmin_{\|\vec x\|_p= 1}\{ r \norm{\vec x} -(\vec x,\vec v)\}
	\]
	under the condition $0<r\le\|\vec v\|_1$.  Without loss of generality,  we assume  
	$|v_1|\geq|v_2|\geq\ldots\geq|v_n|\geq|v_{n+1}|=0$, and define $m_0=\min\{m\in\{1,2,\ldots,n\}:A(m)>r\}$
	and $m_1=\max\{m\in\{1,2,\ldots,n\}:A(m-1)<r\}$
	with $A(m)=\sum_{j=1}^m(|v_j|-|v_{m+1}|)$.
	\begin{description}
		\item[{\bf Scenario 1}] For $r<||\vec v||_1$ and $1<p<\infty$, we have $\vec x^*=\frac{\sign(\vec v)\vec z^*}{||\vec z^*||_p}$, 
		where $\vec z^*=(z_1,z_2,\ldots, z_n)$ with
		$z_i^*=\min\left\{1,\left(\frac{m_0|v_i|}{\sum_{j=1}^{m_0}|v_j|-r}\right)^{\frac{1}{p-1}}\right\}$.
		
		\item[{\bf Scenario 2}] For $r<||\vec v||_1$ and $p=1$, we have $\vec x^*=\frac{\sign(\vec v)\vec z^*}{||\vec z^*||_p}$, 
		where
		\[
		z_i^*=\left\{
		\begin{aligned}
			&1, &1\leq i\leq m_1,\\
			&0, &m_0\leq i\leq n,\\
			&[0,1], &m_1<i<m_0.
		\end{aligned}
		\right.
		\]
		\item[{\bf Scenario 3}] For $r=||\vec v||_1$ or $p=\infty$, the solution $\vec x^*$ satisfies 
		$\frac{\vec x^*}{||\vec x^*||_{\infty}}\in \sgn(\vec v)$ and $||\vec x^*||_p=1$.
	\end{description}
	Moreover, the minimizer $\vec x^*$ satisfies 
	\begin{equation}\label{eq:m}
		\frac{(\vec x^*,\vec v)}{\norm{\vec x^*}} \ge \sum_{i=1}^{m} |v_i|,
	\end{equation}
	where $m =m_0$ for  {\bf Scenario 1} and {\bf Scenario 2}, and $m=n$ for {\bf Scenario 3}. 
\end{proposition} 

The only thing remaining is to figure out the subgradient in Eq.~\eqref{eq:twostep_s2}.
Using the linear property of \blue{the} subgradient, we have 
\begin{equation}
	\label{eq:partial}
	\partial Q_{r}(\vec x)=\frac{1}{2\vol(V)}\left(\partial I(\vec x)+r\partial N(\vec x)\right),
\end{equation}
which means we may construct $\vec s\in \partial Q_{r}(\vec x)$ through $\vec u\in\partial I(\vec x)$ and $\vec v\in \partial N(\vec x)$ as
\begin{equation}\label{sg:gen}
	\vec s=\frac{1}{2\vol(V)}\left(\vec u+r\vec v\right).
\end{equation}

It was suggested in~\cite{SZZ2018} to use a point, denoted by $\bar{\vec p}  = (\bar p_1,\ldots,\bar p_n)$, located on the boundary of $\partial I(\vec x)$ as an indicator for selecting $\vec u\in\partial I(\vec x)$ to solve the maxcut problem \eqref{eq:maxcut-continuous}, where
\begin{align*}
	\bar{p}_{i} &=
	\left\{\begin{array}{ll}{p_{i} \mp q_{i},} & {\text { if } i \in S^{ \pm}(\vec x),} \\ {p_{i}+\operatorname{sign}\left(p_{i}\right) q_{i},} & {\text { if } i \in S^{<}(\vec x),}\end{array}\right. \\
	p_i &= \sum_{j:\{i,j\}\in E} w_{ij}\sign(x_i-x_j)-q_i, \quad 
	q_i = \sum_{j:\{i,j\}\in E\text{ and }x_i=x_j} w_{ij}, \\
	\sign (t) &=\begin{cases}
		1, &\;\text{if }t\ge0,\\
		-1,&\;\text{if }t<0,
	\end{cases}   \quad \text{and} \quad S^<(\vec x)=\{i\in V\big | |x_i|< ||\vec x||_{\infty}\}.
\end{align*}
It tries to select the boundary point at each dimension with the help of  a partial order relation `$\le$' on $\mathbb{R}^2$ by $(x_1,y_1)\le (x_2,y_2)$ if and only if either
$x_1<x_2$ or $x_1=x_2, y_1\le y_2$ holds. 
Given a vector $\vec a=(a_1,\ldots,a_n)\in\mathbb{R}^n$, let $\Sigma_{\vec a}(\vec x)$ be a collection of  permutations of $\set{1,2,\ldots,n}$ such that for any $\sigma\in\Sigma_{\vec a}(\vec x)$,  it holds 
\[
(x_{\sigma(1)},a_{\sigma(1)})\leq(x_{\sigma(2)},a_{\sigma(2)})\leq \cdots\leq (x_{\sigma(n)},a_{\sigma(n)}).
\]
Then for any $\sigma\in\Sigma_{\bar{\vec p}}(\vec x)$, 
we select 
\begin{equation}\label{sg:u}
	\vec u=(u_1,\ldots,u_n)\in\partial I(\vec x), \quad u_i = \sum_{j:\{i,j\}\in E}w_{ij} \sign(\sigma^{-1}(i)-\sigma^{-1}(j)).  \\
\end{equation}

It can be readily verified that $(\vec v, \vec 1)=0$ for any $\vec v\in \partial N(\vec x)$, 
and thus we may select
\begin{equation}\label{sg:v}
	\vec v = (v_1, \ldots, v_n)\in \partial N(\vec x), \quad v_i=\left\{\begin{array}{ll}{d_i\sign(x_i-\alpha),} & {\text { if } i\in V\backslash S^\alpha(\vec x),} \\ 
		{\frac{A}{B}d_i,} & {\text { if } i\in S^\alpha(\vec x),}\end{array}\right.
\end{equation}
where $\alpha\in \median(\vec x)$, $S^\alpha(\vec x)=\{i\in V \big |x_i=\alpha\}$, and
\[
A = -\sum_{i\in V\backslash S^\alpha(\vec x)}v_i=\sum_{x_i<\alpha}d_i-\sum_{x_i>\alpha}d_i, \quad B = \sum_{i\in S^\alpha(\vec x)}d_i
= \sum_{x_i=\alpha}d_i.
\]
Such selection of \blue{a} subgradient was already used to solve the Cheeger cut problem $h(G)=\min\limits_{S\subset V,S\not\in\{\varnothing,V\}} \frac{\cut(S)}{\min\{\vol(S),\vol(S^c)\}}$ in~\cite{Chang2015}
in view of the equivalent continuous formulation $ h(G)=\min\limits_{\vec x~~\mathrm{ nonconstant}} \frac{I(\vec x)}{N(\vec x)}$ \cite{CSZZ2018}. 

With those selections in Eqs.~\eqref{sg:u} and \eqref{sg:v} at our disposal,
we immediately obtain the first continuous iterative algorithm (abbreviated as \textbf{CIA0}) after using Eq.~\eqref{sg:gen} in the three-step iteration \eqref{iter1}. Although it still keeps the monotone increasing $r^{k+1}\ge r^k$ as stated in Proposition~\ref{prop:rs}, \textbf{CIA0} may not have enough increase at every iteration step because the selected $\vec s\in \partial Q_{r}(\vec x)$ is not located on the boundary of $\partial Q_{r}(\vec x)$. The numerical results in Table~\ref{tab:1} show clearly such unsatisfactory performance. Actually, 
according to Eq.~\eqref{eq:m}, the monotone increasing of $\{r^k\}_{k=1}^{\infty}$ has a direct connection to the subgradient $\vec s$ as follows
\begin{equation}\label{eq:rsr}
	r^{k+1}=\frac{Q_{r^{k+1}}(\vec x^{k+1})}{\norm{\vec x^{k+1}}}\ge \frac{Q_{r^k}(\vec x^{k+1})}{\norm{\vec x^{k+1}}}\ge \frac{(\vec x^{k+1},\vec s^k)}{\norm{\vec x^{k+1}}}\ge \sum_{i\in \iota} |s_i^k|\ge r^k,
\end{equation}
where there exists an index set $\iota\subset\{1,2,\ldots,n\}$, whose size should be $m$ in Eq.~\eqref{eq:m}.
In particular, such monotone increasing could be strict, i.e., $r^{k+1} > r^{k}$ via improving the last `$\ge$' in Eq.~\eqref{eq:rsr} into `$>$',  if $\Vert \vec s^k\Vert_1>r^k$  holds in each step.
To this end, we only need to determine a subgradient $\vec s^k \in \partial Q_{r^k}(\vec x^k)$ such that $\|\vec s^k\|_1>r^k$ if there exists $\vec s^k \in \partial Q_{r^k}(\vec x^k)$ such that $\|\vec s^k\|_1>r^k$. 
That is, we should hunt $\vec s^k$ on the boundary of $\partial Q_{r^k}(\vec x^k)$ after noting the convexity of the set $\partial Q_{r^k}(\vec x^k)$ and the function $\|\cdot\|_1$.

We know that, if $|S^{\alpha}(\vec x)|\leq 1$,  $\partial N(\vec x)$ contains just a single vector, denoted by $\vec a$, which is given by Eq.~\eqref{sg:v}, too; otherwise, $(\partial N(\vec x))_i$ becomes an interval for $i\in S^{\alpha}(\vec x)$, 
denoted by $[a_{i}^L, a_{i}^R]$ with $a_{i}^L=\max\{A-B+d_i,-d_i\}$ and $a_{i}^R=\min\{A+B-d_i,d_i\}$.
We still adopt a point $\vec b=(b_1,\ldots,b_n)\in\mathbb{R}^n$ on the boundary of $\partial Q_{r}(\vec x)$
as an indicator for choosing $\vec u\in\partial I(\vec x)$ and $\vec v\in \partial N(\vec x)$
such that the resulting $\vec s$ given by Eq.~\eqref{sg:gen} lives on the boundary of 
$\partial Q_{r}(\vec x)$, and the discussion is accordingly divided into two cases below.
\begin{description}
	\item[$\bullet~|S^{\alpha}(\vec x)|\le 1$:] $\,$\newline For $i=1,2,\ldots,n$, let 
	\begin{equation}\label{u1}
		b_{i}=\left\{\begin{array}{ll}{p_{i} \mp q_{i}+r a_i,} & {\text { if } i \in S^{ \pm}(\vec x),} \\ {p_{i}+r a_i+\operatorname{sign}\left(p_{i}+r a_i\right) q_{i},} & \text { if } i \in S^{<}(\vec x),
		\end{array}\right.
	\end{equation}
	and then we select 
	\begin{align}
		u_i &= \sum_{j:\{i,j\}\in E}w_{ij} \sign(\sigma^{-1}(i)-\sigma^{-1}(j)),  \quad \sigma\in\Sigma_{\vec b}(\vec x),  \label{c1:u}\\
		v_i &=a_i.\label{c1:v}
	\end{align}
	\item[$\bullet~|S^{\alpha}(\vec x)|\ge 2$:] $\,$\newline For $i=1,2,\ldots,n$, let 
	\begin{align}
		a_i&=
		\begin{cases}
			d_i\sign(x_i-\alpha),&\text{if $i\in V\setminus S^{\alpha}(\vec x)$},\\
			a^L_i,&\text{if $i\in S^{\alpha}(\vec x)\cap S^+(\vec x)$},\\
			a^R_i,&\text{if $i\in S^{\alpha}(\vec x)\cap S^-(\vec x)$},\\                                                                                                                                                                                                                                                                                                                                                                                                                                                                                                                                                                                                                                                                                                                                                                                                                                                                                                                                                                                                      \displaystyle \argmax_{t\in\{a^L_i,a^R_i\}}\{|p_i+rt|\} ,&\text{if $i\in S^{\alpha}(\vec x)\cap S^<(\vec x)$},
		\end{cases}
		\label{hh} \\
		b_{i}&=\left\{\begin{array}{ll}{p_{i} \mp q_{i}+r a_i,} & {\text { if } i \in S^{ \pm}(\vec x),} \\ {p_{i}+r a_i+\operatorname{sign}\left(p_{i}+r a_i\right) q_{i},} & {\text { if } i \in S^{<}(\vec x).}\end{array}\right. \label{u2}
	\end{align}
	Given $\sigma\in\Sigma_{\vec b}(\vec x)$ with the indicator $\vec b$ determined by Eq.~\eqref{u2}, we choose $\vec u\in\partial I(\vec x)$ according to Eq.~\eqref{c1:u}. 
	Let $i^*$ be the first index satisfying $x_{\sigma(i^*)}=\alpha$ and $i^*+t^*$ the last index satisfying $x_{\sigma(i^*+t^*)}=\alpha$, i.e.,  $x_{\sigma(i^*-1)}<x_{\sigma(i^*)}=x_{\sigma(i^*+1)}=\cdots=x_{\sigma(i^*+t^*)}=\alpha<x_{\sigma(i^*+t^*+1)}$, 
	and $j^*=\mathop{\argmax}\limits_{j\in\{\sigma(i^*),\sigma(i^*+t^*)\}}\{|b_j|\}$.
	Then, we select
	\begin{equation}
		\label{v2}
		v_i=
		\begin{cases}
			a_i,&\text{if }i\in V\backslash S^{\alpha}(\vec x),\\
			a_i,&\text{if }i\in S^{\alpha}(\vec x)\cap S^+(\vec x)\cap\{\sigma(i^*)\},\\
			\frac{A-a_{\sigma(i^*)}}{B-d_{\sigma(i^*)}}d_i,&\text{if }i\in S^{\alpha}(\vec x)\cap S^+(\vec x)\backslash\{\sigma(i^*)\},\\
			a_i,&\text{if }i\in S^{\alpha}(\vec x)\cap S^-(\vec x)\cap\{\sigma(i^*+t^*)\},\\
			\frac{A-a_{\sigma(i^*+t^*)}}{B-d_{\sigma(i^*+t^*)}}d_i,&\text{if }i\in S^{\alpha}(\vec x)\cap S^-(\vec x)\backslash\{\sigma(i^*+t^*)\},\\
			a_i,&\text{if }i\in S^{\alpha}(\vec x)\cap S^<(\vec x)\cap\{j^*\},\\
			\frac{A-a_{j^*}}{B-d_{j^*}}d_i,&\text{if }i\in S^{\alpha}(\vec x)\cap S^<(\vec x)\backslash\{j^*\}.\\
		\end{cases}
	\end{equation}
\end{description}


Plugging the above selection of \blue{a} subgradient into the three-step iteration \eqref{iter1}
yields the second continuous iterative algorithm, named as \textbf{CIA1} for short. 
It will be shown in next section that \textbf{CIA1} is sufficient to get enough increase at every step (see Theorem~\ref{prop:S123}) and
converges to a local \blue{optimum} (see Theorem~\ref{thm:conver_3}).

\begin{figure}
	\centering
	\tikzstyle{decision} = [diamond, draw, fill=green!20, text width=4em, aspect=1.2, text badly centered, node distance=3cm, inner sep=0pt]
	\tikzstyle{decision2} = [diamond, draw, fill=green!20, text width=6em, aspect=1.2, text badly centered, node distance=3cm, inner sep=0pt]
	\tikzstyle{block} = [rectangle, draw, fill=blue!20,
	text width=6em, text centered, rounded corners, minimum height=4em]
	\tikzstyle{io} = [rectangle, draw, fill=red!20,
	text width=4em, text centered, rounded corners, minimum height=2em]
	\tikzstyle{line} = [draw, very thick, color=black!50, -latex']
	\tikzstyle{cloud} = [draw, ellipse,fill=red!20, node distance=2.5cm,
	minimum height=2em]
	
	\begin{tikzpicture}[scale=2, node distance = 2cm, auto]
		\node [io] (init) {Start};
		\node [block, below of=init] (iterah) {Iterate anti-Cheeeger cut};
		\node [decision, below of=iterah,node distance=2.5cm] (ext1) {Exceed $T_{\text{tot}}$?};

		\node [io, below of= ext1, node distance=2cm](stop1){Stop};
		\node [decision2, right of=ext1, node distance=4cm] (keep_1) {Is $F_{\text{anti}}(\vec x)$  unchanged for $T_=$ steps?};
		\node [block, below of=stop1](itermc){Iterate maxcut};
		\node [decision, below of=itermc, node distance=2.5cm] (ext2) {Exceed $T_{\text{tot}}$?};
		\node [decision2, left of=itermc, node distance=4cm] (keep_2) {Is $F_{\max}(\vec x)$ unchanged for $T_=$ steps?};
		\node [io, below of=ext2, node distance=2cm] (stop) {Stop};
		\path [line] (init) -- (iterah);
		\path [line] (iterah) -- (ext1);
		\path [line] (ext1) -- node [, color=black] {Yes} (stop1);
		\path [line] (ext1) -- node [near start, color=black] {No} (keep_1);
		\path [line] (keep_1) |- node [anchor=south, color=black] {No} (iterah);
		\path [line] (keep_1) |- node [anchor=north, color=black] {Yes} (itermc);
		\path [line] (ext2) -| node [near start, color=black] {No} (keep_2);
		\path [line] (keep_2) |- node [anchor=east,color=black]{Yes}(iterah);
		\path [line] (keep_2) -- node [anchor=south,color=black]{No}(itermc);
		\path [line] (ext2) -- node [, color=black] {Yes}(stop);
		\path [line] (itermc) -- (ext2);
	\end{tikzpicture}
	\caption{\small The flowchart of \textbf{CIA2} --- a continuous iterative algorithm for the anti-Cheeger cut equipped with breaking out of local optima by the maxcut. $F_{\text{anti}}(\vec x)$ denotes the objective function 
		of the anti-Cheeger cut problem~\eqref{eq:antiCheeger-I(x)} and $F_{\max}(\vec x)$ the objective function of
		the maxcut problem~\eqref{eq:maxcut-continuous}. $T_{\text{tot}}$ gives the total iterative steps and $T_{=}$ counts the number of successive iterative steps in which the objective function values $F_{\text{anti}}(\vec x)$ or $F_{\max}(\vec x)$ keep unchanged.}
	\label{flowchart}
\end{figure}

In practice,  \textbf{CIA1} may converge fast to a local optimum (see e.g., Fig.~\ref{fig:smallsteps}), and thus need to cooperate with some local breakout techniques to further improve the solution quality. 
The similarity between the anti-Cheeger cut problem~\eqref{eq:antiCheeger-I(x)} and the maxcut problem~\eqref{eq:maxcut-continuous}, which shares a common numerator $I(\vec x)$ for the cut values,
provides a natural choice for us. We propose to switch to the SI iterations for the maxcut problem~\eqref{eq:maxcut-continuous} when \textbf{CIA1} cannot increase the objective function value for some successive iterative steps.
The resulting iterative algorithm is denoted by \textbf{CIA2}, the flowchart of which is displayed in Fig.~\ref{flowchart}.
It can be readily seen there that the maxcut iterations obtain an equal position with the anti-Cheeger cut iterations,
which means \textbf{CIA2} also improves the solution quality of the maxcut problem by using the anti-Cheeger cut to jump out of local optima at the same time. 
That is, the anti-Cheeger cut problem~\eqref{eq:antiCheeger-I(x)} and the maxcut problem~\eqref{eq:maxcut-continuous} are fully treated on equal terms by \textbf{CIA2}.

\section{Local convergence}
\label{sec:loc}

We already figure out the selection of \blue{a} subgradient (\textbf{CIA1}) in the iterative scheme~\eqref{iter1}, then we determine the selection of next solution $\vec x^{k+1}$ in Eq.~\eqref{eq:twostep_x2}. We choose the solution $\vec x^{k+1}\in \partial X_p^{k+1}$ in the same way as did in \cite{SZZ2018}, where $X_p^{k+1}$ is the closed set of all the solutions $\vec x^{k+1}$, and $\partial X_p^{k+1}$ is the boundary of $\partial X_p^{k+1}\in X_p^{k+1}$. Thus, the three-step iterative scheme~\eqref{eq:twostep_x2} of \textbf{CIA1} can be clarified as
\begin{subequations}
	\label{iter1c}
	\begin{numcases}{}
		\vec x^{k+1} \in \partial X_p^{k+1}, \label{eq:twostep_x2c}
		\\
		r^{k+1}=F(\vec x^{k+1}),
		\label{eq:twostep_r2c}
		\\
		\vec s^{k+1} = \vec s^\sigma, \,\,\, \sigma\in \Sigma_{\vec b}(\vec x^{k+1}).
		\label{eq:twostep_s2c}
	\end{numcases}
\end{subequations}
Let 
\begin{equation}\label{eq:C}
	C = \{\vec x\in\mathbb{R}^n \big| F(\vec y)\leq F(\vec x), \,\, \forall\,\vec y\in\{R_i\vec x: i\in\{1,\ldots,n\}\}\},
\end{equation}
where $R_i\vec x$ is defined as
\begin{equation}\label{eq:Tialp}
	(R_i\vec x)_j =
	\begin{cases}
		x_j, & j\ne i,\\
		-x_j, & j=i.
	\end{cases}
\end{equation}
The following two theorems verify two important properties of \textbf{CIA1} in~\eqref{iter1c}. Theorem~\ref{prop:S123} and Proposition~\ref{prop:rs} imply that the sequence $\{r^k\}$  strictly increases when there exists $\vec s \in \partial Q_{r^k}(\vec x^k)$ such that $\|\vec s\|_1>r^k$, {the opposite statement of which} is the necessary condition for the convergence of $\{r^k\}$. Theorem~\ref{thm:conver_3} guarantees the iterating scheme converges with $r^k=r^*$ and $\vec x^k=\vec x^*$ in finite steps, where $\vec x^*\in C$ is a local maximizer from the discrete point of view. Furthermore, we are able to prove that $\vec x^*$ is also a local maximizer in the neighborhood $U(\vec x^*)$: 
\begin{equation}
	\label{neighbor}
	U(\vec x^*)=\left\{\vec y\in \mathbb{R}^n\,\, \Big|\,\, \|\frac{\vec y}{\norm{\vec y}}-\frac{\vec x^*}{\norm{\vec x^*}}\|_\infty<\frac12\right\}.
\end{equation}

\begin{theorem}\label{prop:S123}
	We have $$\|\vec s^\sigma\|_1>r^k,\,\forall\, \sigma\in \Sigma_{\vec b}(\vec x^k),$$
	if and only if there exists $\vec s \in \partial Q_{r^k}(\vec x^k)$ satisfying $\|\vec s\|_1>r^k$.
\end{theorem}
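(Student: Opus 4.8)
The plan is to reduce the stated equivalence to the single assertion that the CIA1-constructed subgradient attains the largest $\ell_1$ norm over the entire subdifferential, namely
\[
\normone{\vec s^\sigma} = \max_{\vec s\in\partial Q_{r^k}(\vec x^k)}\normone{\vec s}\qquad\text{for every }\sigma\in\Sigma_{\vec b}(\vec x^k).
\]
Once this identity is available both implications of the theorem follow at once: writing $M$ for the maximum on the right, the left-hand side $\normone{\vec s^\sigma}>r^k$ for all $\sigma$ is equivalent to $M>r^k$, and the right-hand side (some $\vec s$ in the subdifferential has $\normone{\vec s}>r^k$) is also equivalent to $M>r^k$. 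Note that the forward direction does not even need the full identity, since each $\vec s^\sigma$ itself lies in $\partial Q_{r^k}(\vec x^k)$ by Eq.~\eqref{sg:gen}, so the whole burden is the max-attainment identity.

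To prove it I would first use Eq.~\eqref{eq:partial} to write every $\vec s\in\partial Q_{r^k}(\vec x^k)$ as $\frac{1}{2\vol(V)}(\vec u+r^k\vec v)$ with $\vec u\in\partial I(\vec x^k)$ and $\vec v\in\partial N(\vec x^k)$, so that $\normone{\vec s}=\frac{1}{2\vol(V)}\sum_{i=1}^n|u_i+r^kv_i|$ and maximizing $\normone{\vec s}$ amounts to maximizing $\sum_i|u_i+r^kv_i|$ over $\vec u\in\partial I(\vec x^k)$ and $\vec v\in\partial N(\vec x^k)$. The key structural point is that most coordinates are frozen: across every definite edge ($x_i\neq x_j$) the contribution to $u_i$ is fixed at $p_i$, and for every $i\notin S^\alpha(\vec x^k)$ one has the fixed value $v_i=d_i\sign(x_i-\alpha)$. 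The only genuine freedom is (i) the antisymmetric splitting of the tied edges ($x_i=x_j$) inside $\partial I$, and (ii) the choice of $v_i$ on the median set $S^\alpha(\vec x^k)$ inside $\partial N$, the latter being constrained only by the boxes $v_i\in[a_i^L,a_i^R]$ together with the single linear constraint $(\vec v,\vec 1)=0$, i.e. $\sum_{i\in S^\alpha(\vec x^k)}v_i=A$.

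For freedom (i) I would argue, once a target sign for each coordinate is fixed, that orienting every tied edge from the endpoint of smaller indicator to the one of larger indicator maximizes $\sum_i|u_i+r^kv_i|$; this is the rearrangement argument underlying the maxcut selection in \cite{SZZ2018}, and it is exactly what the ordering $\sigma\in\Sigma_{\vec b}(\vec x^k)$ together with Eq.~\eqref{c1:u} realizes, because $\sigma$ sorts the pairs $(x_i,b_i)$ and the indicator $b_i$ in Eqs.~\eqref{u1} and~\eqref{u2} records the value that $u_i+r^kv_i$ takes at the extreme assignment. One should also check that $\normone{\vec s^\sigma}$ is independent of which admissible $\sigma\in\Sigma_{\vec b}(\vec x^k)$ is used: two such permutations differ only by transpositions of coordinates with identical $(x_i,b_i)$, and such swaps permute but do not change the multiset of coordinate contributions. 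When $|S^\alpha(\vec x^k)|\le 1$ the set $\partial N(\vec x^k)$ is a single point, freedom (ii) is absent, and the above already delivers the maximum.

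The main obstacle is freedom (ii) when $|S^\alpha(\vec x^k)|\ge 2$, where the constraint $\sum_{i\in S^\alpha}v_i=A$ prevents maximizing the terms $|u_i+r^kv_i|$ independently. Here I would prove by an exchange argument that an optimal $\vec v$ may be taken with every coordinate of $S^\alpha(\vec x^k)$ pushed to an endpoint of its box except one distinguished coordinate absorbing the residual of the constraint, and that this residual should be loaded onto the coordinate of largest indicator $|b_j|$, which is precisely the vertex $j^*$ (respectively $\sigma(i^*),\sigma(i^*+t^*)$) singled out in Eqs.~\eqref{hh}--\eqref{v2}: shifting mass from a larger-$|b|$ coordinate to a smaller-$|b|$ one cannot increase $\sum_{i\in S^\alpha}|u_i+r^kv_i|$, because the marginal effect of $v_i$ on $|u_i+r^kv_i|$ is governed by the sign and magnitude encoded in $b_i$. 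Combining the two freedoms and verifying their mutual consistency, so that the same indicator $\vec b$ simultaneously orients the tied edges and selects $j^*$, shows that $\vec s^\sigma$ realizes the coordinatewise-and-constraint upper bound, which is the asserted maximum. The delicate point, and the crux of the argument, is confirming that the extreme assignment in Eqs.~\eqref{hh}--\eqref{v2} is at once feasible (it respects $\sum_{i\in S^\alpha}v_i=A$ and the boxes) and sign-consistent with the $\vec u$ produced by $\sigma$.
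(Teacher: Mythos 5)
Your reduction of the theorem to the identity $\normone{\vec s^\sigma}=\max_{\vec s\in\partial Q_{r^k}(\vec x^k)}\normone{\vec s}$ is a reduction to a statement that is \emph{false} in general, so the argument cannot be completed along these lines. Concretely, consider four vertices $1,2,3,4$ forming a path with unit weights, all sharing the same value of $x$ and having no edges leaving this level set, so that $p_i=0$ and $(q_1,q_2,q_3,q_4)=(1,2,2,1)$; for $r^k=0$ (or, by continuity, $r^k$ small) the indicator is $\vec b=(1,2,2,1)$, so every $\sigma\in\Sigma_{\vec b}(\vec x^k)$ places $\{1,4\}$ before $\{2,3\}$ and Eq.~\eqref{c1:u} yields $\vec u$ with $\sum_i|u_i|=4$ (e.g.\ $\vec u=(-1,0,2,-1)$ for the order $1,4,2,3$), whereas the order $1,3,2,4$ — a perfectly admissible element of $\partial I(\vec x^k)$, just not of $\Sigma_{\vec b}(\vec x^k)$ — yields $\vec u=(-1,2,-2,1)$ with $\sum_i|u_i|=6$. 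So the CIA1 selection does \emph{not} attain the maximal $\ell_1$ norm over the subdifferential; your freedom-(i) rearrangement claim (that sorting tied vertices by the indicator maximizes $\sum_i|u_i+r^kv_i|$) is exactly where this breaks, and the $\sigma$-independence of $\normone{\vec s^\sigma}$ you assert also fails in general, since transposing two tied, adjacent vertices with equal $b_i$ flips an edge orientation and changes both $u_i$ and $u_j$ by $\pm 2w_{ij}$.

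The theorem is true for a weaker reason, and the paper's mechanism is entirely different: it never compares $\ell_1$ norms directly. By Proposition~\ref{prop:rs}, $\normone{\vec s}>r^k$ holds if and only if $\vec x^k/\norm{\vec x^k}\notin\sgn(\vec s)$, i.e.\ if and only if \emph{some single coordinate} of $\vec s$ misaligns with $\vec x^k$ (negative on $S^+$, positive on $S^-$, or nonzero on $S^<$). The hypothesis thus supplies a witness coordinate $i^*$ with $2\vol(V)s_{i^*}$ dominated by the indicator $b_{i^*}$ in the appropriate sense, and the proof then observes that, within the level set $\{i:x_i^k=x_{i^*}^k\}$, the $\sigma$-first index $i_1$ and $\sigma$-last index $i_2$ satisfy $2\vol(V)s^\sigma_{i_1}=b_{i_1}\le b_{i^*}$ and $2\vol(V)s^\sigma_{i_2}=b_{i_2}\ge b_{i^*}$ because $\sigma$ sorts by $(x_i,b_i)$; one of these two coordinates is therefore misaligned for $\vec s^\sigma$, and Proposition~\ref{prop:rs} gives $\normone{\vec s^\sigma}>r^k$. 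In short, the theorem is a threshold-detection property ($\vec s^\sigma$ is not sign-aligned with $\vec x^k$ whenever some subgradient is not), not a norm-maximization property, and recasting it as the latter both overshoots the target and lands on a false claim.
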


\begin{proof} The necessity is obvious, we only need to prove the sufficiency. Suppose we can select a subgradient $\vec s=(s_1,\ldots,s_n)\in \partial Q_{r^k}(\vec x^k)$ satisfying $\|\vec s\|_1>r^k$, which directly implies $\vec x^k/ \norm{\vec x^k}\notin \sgn(\vec s)$ according to Proposition~\ref{prop:rs}. Accordingly, there exists an $i^*\in \set{1,2,\ldots,n}$ such that  
	$x_{i^*}^k/\norm{\vec x^k}\notin \sgn(s_{i^*})$, 
	and the corresponding component  $s_{i^*}$ satisfies
	\begin{equation}
		\left\{
		\begin{aligned}
			&s_{i^*}<0,&\text{if }i^*\in S^+(\vec x^k),\\
			&s_{i^*}>0,&\text{if }i^*\in S^-(\vec x^k),\\
			&|s_{i^*}|>0,&\text{if }i^*\in S^<(\vec x^k),\\
		\end{aligned}
		\right.
	\end{equation}
	which implies  
	\begin{equation}
		\label{eq:ee}
		\left\{
		\begin{aligned}
			&u_{i^*}\geq p_{i^*}-q_{i^*},\,v_{i^*}\geq a_{i^*}\,\Rightarrow\, 2\vol(V)s_{i^*}\geq b_{i^*},&\text{if }i^*\in S^+(\vec x^k),\\
			&u_{i^*}\leq p_{i^*}-q_{i^*},\,v_{i^*}\leq a_{i^*}\,\Rightarrow\, 2\vol(V)s_{i^*}\leq b_{i^*},&\text{if }i^*\in S^-(\vec x^k),\\
			&|u_{i^*}+r^kv_{i^*}|\leq |b_{i^*}|\,\Rightarrow\, 2\vol(V)|s_{i^*}|\leq |b_{i^*}|,&\text{if }i^*\in S^<(\vec x^k).\\
		\end{aligned}
		\right.
	\end{equation}
	
	Let $S^=_{i^*}(\vec x^k)=\{i\big | x_i^k=x_{i^*}^k\}$,
	and $i_1,i_2\in S^=_{i^*}(\vec x^k)$ be the indexes minimizing and maximizing function $\sigma^{-1}(\cdot)$ over $S^=_{i^*}(\vec x^k)$, respectively. 
	Then, we have 
	\begin{align*}
		s^\sigma_{i_1}=& \frac{1}{2\vol(V)}\left(\sum_{t:\{t,i_1\}\in E}w_{i_1t}\sign(\sigma^{-1}(i_1)-\sigma^{-1}(t))+r^kv_{i_1}\right)\\
		= &\frac{1}{2\vol(V)}\left(p_{i_1}-q_{i_1}+r^kv_{i_1}\right),\\
		s^\sigma_{i_2}=&\frac{1}{2\vol(V)}\left(\sum_{t:\{t,i_2\}\in E}w_{i_2t}\sign(\sigma^{-1}(i_2)-\sigma^{-1}(t))+r^kv_{i_2}\right)\\
		=& \frac{1}{2\vol(V)}\left(p_{i_2}+q_{i_2}+r^kv_{i_2}\right),
	\end{align*}
	{and the subgradient} selection in Eqs.~\eqref{u1}-\eqref{v2} yields  
	\begin{equation}
		\left\{
		\begin{aligned}
			&s_{i_1}^\sigma=b_{i_1}\leq b_{i^*}\leq 2\vol(V)s_{i^*}<0, &\text{if }i^*\in S^+(\vec x^k),\\
			&s_{i_2}^\sigma=b_{i_2}\geq b_{i^*}\geq 2\vol(V)s_{i^*}>0, &\text{if }i^*\in S^-(\vec x^k),\\
			&\max\{|s_{i_1}^\sigma|,|s_{i_2}^\sigma|\}=\max\{|b_{i_1}|,|b_{i_2}|\}\geq |b_{i^*}|>0, &\text{if }i^*\in S^<(\vec x^k),\\
		\end{aligned}
		\right.
	\end{equation}
	where we have used Eq.~\eqref{eq:ee}. 
	Therefore, there exists $i\in\{i_1,i_2\}$ satisfying 
	$x_{i}^k/\norm{\vec x^k}\notin \sgn(s_{i}^\sigma)$, 
	and thus we have $||\vec s^\sigma||_1>r^k$ according to Proposition~\ref{prop:rs}. 
\end{proof}

\begin{theorem}[finite-step local convergence]
	\label{thm:conver_3}
	Assume the sequences $\{\vec x^k\}$ and $\{r^k\}$ are generated by \textbf{CIA1} in Eq.~\eqref{iter1c}   from any initial point $\vec x^0\in \mathbb{R}^n\setminus \{\vec0\}$. There must exist $N\in\mathbb{Z}^+$ and $r^*\in\mathbb{R}$ such that, for any $k>N$, 
	$r^k=r^*$ and $\vec x^{k+1}\in C$ are local maximizers in $U(\vec x^{k+1})$. 
\end{theorem}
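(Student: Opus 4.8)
The plan is to prove the statement in three stages: that the non-decreasing sequence $\{r^k\}$ stabilizes after finitely many steps, that the stabilized iterate lies in $C$, and that it is a local maximizer on $U(\vec x^*)$. Throughout I write $\vec x^*,r^*$ for the stabilized iterate and value and $S^*=S^+(\vec x^*)$ for the induced cut. For finite convergence, Proposition~\ref{prop:rs} already gives $0\le r^{k-1}\le r^k\le\ah(G)$, so $\{r^k\}$ is monotone and bounded; the decisive point is that it takes only finitely many values. Using the boundary-solution selection $\vec x^{k+1}\in\partial X_p^{k+1}$ of \cite{SZZ2018} together with the explicit structure in Proposition~\ref{Thm:exact_solution} (most transparently in Scenario~3, where $\vec x^{k+1}/\norm{\vec x^{k+1}}\in\{-1,+1\}^n$), the value $r^{k+1}=F(\vec x^{k+1})$ is the anti-Cheeger ratio $\cut(S)/\max\{\vol(S),\vol(S^c)\}$ of the induced cut, a number from the finite set indexed by $S\subseteq V$. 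A non-decreasing sequence confined to a finite set is eventually constant, producing $N$ and $r^*$ with $r^k=r^*$ for all $k>N$.

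The second stage extracts the optimality content of stabilization. Once $r^{k+1}=r^k$, the final inequality in Eq.~\eqref{eq:rsr} cannot be strict, so the selected boundary subgradient satisfies $\|\vec s^\sigma\|_1=r^k$; by Theorem~\ref{prop:S123} this is equivalent to the condition that \emph{no} $\vec s\in\partial Q_{r^*}(\vec x^*)$ obeys $\|\vec s\|_1>r^*$, and by part~(1) of Proposition~\ref{prop:rs} to $\vec x^*/\norm{\vec x^*}\in\sgn(\vec s^*)$. To obtain $\vec x^*\in C$ I argue contrapositively: if $F(R_i\vec x^*)>r^*$ for some coordinate $i$, then flipping vertex $i$ across the cut strictly raises the anti-Cheeger ratio, and I would show that this improving flip yields, through the selection rules \eqref{u1}--\eqref{v2}, a subgradient in $\partial Q_{r^*}(\vec x^*)$ of $\ell_1$-norm exceeding $r^*$, contradicting the stabilization condition. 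Hence $F(R_i\vec x^*)\le F(\vec x^*)$ for every $i$, which is exactly membership in $C$ as defined by Eq.~\eqref{eq:C}.

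The third stage passes from discrete to continuous local optimality, and the natural tool is the co-area description of the two Lovász extensions. With $M=\norm{\vec y}$ and $V_t=\{i:y_i>t\}$ one has $I(\vec y)=\int_{-M}^{M}\cut(V_t)\,dt$, and carrying out the minimization over the median in $N(\vec y)$ gives $2\vol(V)\norm{\vec y}-N(\vec y)=\int_{-M}^{M}\max\{\vol(V_t),\vol(V_t^c)\}\,dt$; consequently $F(\vec y)$ is a mediant of threshold ratios and is bounded above by $\max_t \cut(V_t)/\max\{\vol(V_t),\vol(V_t^c)\}$. For $\vec y\in U(\vec x^*)$ the radius $\tfrac12$ in Eq.~\eqref{neighbor} forces $V_t=S^*$ throughout $t\in[-M/2,M/2]$, so the middle band contributes precisely the ratio $r^*$, while on the two outer bands $V_t$ runs through a nested chain of subsets and supersets of $S^*$. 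It therefore suffices to bound the mediant over these outer bands by $r^*$, after which $F(\vec y)\le r^*=F(\vec x^*)$ follows.

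The main obstacle is exactly this last bound. Membership in $C$ only excludes \emph{single}-vertex flips, whereas the threshold sets appearing in $U(\vec x^*)$ may differ from $S^*$ by many vertices at once, so $\vec x^*\in C$ by itself does not obviously control them. The crux is to show that the stronger stabilization condition---that no $\vec s\in\partial Q_{r^*}(\vec x^*)$ has $\|\vec s\|_1>r^*$---already bounds the anti-Cheeger ratio of every cut in the nested chain by $r^*$. I expect this to follow by identifying the $\ell_1$-norm of the $\ell_1$-maximal boundary subgradient $\vec s^\sigma$ with the best threshold-cut ratio taken along the order $\sigma\in\Sigma_{\vec b}(\vec x^*)$, since the indicator $\vec b$ and the redistribution \eqref{v2} across the median block are precisely what encode the order in which thresholding reveals the vertices; once that identification is in hand the mediant inequality closes the argument.
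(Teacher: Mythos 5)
Your first two stages track the paper's proof: monotone boundedness plus finitely many attainable values gives stabilization, and $C$-membership is obtained by turning a profitable single flip $R_i\vec x^*$ into a contradiction with the stabilization condition (the paper does this by exhibiting $\vec s\in\partial Q_{r^*}(\vec x^*)$ with $x^*_is_i<0$, contradicting $\vec x^*/\norm{\vec x^*}\in\sgn(\vec s)$ via Proposition~\ref{prop:rs}(1); this is equivalent to your route through Theorem~\ref{prop:S123}). However, your finiteness argument has a gap for $1<p<\infty$: in Scenario~1 of Proposition~\ref{Thm:exact_solution} the entries $z_i^*$ lie strictly between $0$ and $1$ whenever $m_0<n$, so $\vec x^{k+1}/\norm{\vec x^{k+1}}$ is not a sign vector, $r^{k+1}=F(\vec x^{k+1})$ is not the ratio of any single cut, and the set of attainable values is not finite. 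The paper treats this case separately by a pigeonhole/subsequence argument: along a strictly increasing subsequence the increment $r^{k_j+1}-r^{k_j}$ is bounded below by $|s^{k_j}_{\eta^{k_j}(m+1)}|$ via Eq.~\eqref{eq:rsr}, and since the selection data $(\vec u,\vec v,\eta)$ range over a finite set one extracts a uniform positive lower bound on the increments, contradicting $r^k\le 1$.

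The more serious divergence is your third stage, whose crux you have correctly flagged as unresolved. The mediant/co-area route requires that \emph{every} threshold cut $V_t$ of every $\vec y\in U(\vec x^*)$ --- that is, every subset and superset of $S^*$ arising in the nested chains --- have anti-Cheeger ratio at most $r^*$; neither $\vec x^*\in C$ (which controls only single flips) nor your conjectured identification of $\|\vec s^\sigma\|_1$ with a best threshold ratio delivers this, so the argument does not close. The paper avoids threshold decomposition entirely: after stabilization one has $S^<(\vec x^*)=\varnothing$ and $\vec x^*/\norm{\vec x^*}\in\sgn(\vec s)$ for every $\vec s\in\partial Q_{r^*}(\vec x^*)$; for $\vec y\in U(\vec x^*)$ the rescaled $\vec y'=\frac{\norm{\vec x^*}}{\norm{\vec y}}\vec y$ satisfies $\sign(\vec y')=\sign(\vec x^*)$ and $|y'_j|\le|x^*_j|$, the function $t\mapsto Q_{F(\vec x^*)}(t(\vec y'-\vec x^*)+\vec x^*)$ is linear on $[0,1]$ with slope $(\vec s,\vec y'-\vec x^*)=\sum_j s_j(y'_j-x^*_j)\le 0$ term by term, and the identity
\begin{equation*}
F(\vec y)-F(\vec x^*)=\frac{2\vol(V)}{2\vol(V)\|\vec y'\|_\infty-N(\vec y')}\bigl(Q_{F(\vec x^*)}(\vec y')-Q_{F(\vec x^*)}(\vec x^*)\bigr)
\end{equation*}
then gives $F(\vec y)\le F(\vec x^*)$. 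You should replace your third stage with this first-order argument along segments, or else supply a proof of the strong chain bound, which your current outline does not contain.
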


\begin{proof}
	We first prove that the sequence $\{r^k\}$ takes finite values. It is obvious that for $p=1$, $p=\infty$ and $m=n$(one case in Eq.~\eqref{eq:m}), we have 
	\begin{equation}\label{eq::3value}
		x^{k+1}_i/||\vec x^{k+1}||_{\infty}\in\{-1,0,1\},\quad \forall\, i=1,2,\ldots,n,
	\end{equation}
	due to $\vec x^{k+1}\in \partial X_p^{k+1}$,
	and then $r^{k+1} (= F(\vec x^{k+1}))$ takes a limited number of values. 
	In view of the fact that $\{r^k\}$ increases monotonically (see Proposition~\ref{prop:rs}), there exists $N\in\mathbb{Z}^+$ and $r^k\in\mathbb{R}$ must take a certain value for $k>N$. 
	That is, we only need to consider the remaining case for $1<p<\infty$ and $m<n$. 
	
	Suppose the contrary, we have that the subsequence $\{r^{k_j}\}_{j=1}^{\infty}\subset \{r^k\}_{k=1}^{\infty}$ increases strictly. According to Eq.~\eqref{eq:rsr}, there exists a permutation $\eta^{k_j}:\{1,2,\ldots,n\}\rightarrow \{1,2,\ldots,n\}$ satisfying $$|s^{k_j}_{\eta^{k_j}(1)}|\geq |s^{k_j}_{\eta^{k_j}(2)}|\geq \ldots\geq |s^{k_j}_{\eta^{k_j}(n)}|,$$ then we have 
	\begin{equation}
		\label{rrss}
		r^{k_{j+1}}-r^{k_j}\geq r^{k_{j}+1}-r^{k_j}\geq \sum_{i=m+1}^n|s^{k_j}_{\eta^{k_j}(i)}|\geq |s_{\eta^{k_j}(m+1)}^{k_j}|.
	\end{equation}
	If $s_{\eta^{k_j}(m+1)}^{k_j}=0$ holds for some $k_j$, then $\vec x^{k_j+1}$ satisfies Eq.~\eqref{eq::3value} derived from Scenario 1 of Proposition~\ref{Thm:exact_solution}. Meanwhile, we also have $r^{k_j}=||\vec s^{k_j}||_1$, which implies $r^k=r^{k_j}$ for $k>k_j$. This is a contradiction. Therefore, we have $|s_{\eta^{k_j}(m+1)}^{k_j}|>0,\,\forall\, j\in\mathbb{N}$ for the subsequence $ \{r^{k_j}\}_{j=1}^{\infty}$.  
	The range of $\sum_{i\in \iota}|s_i^k|$ is 
	\begin{equation*} 
		\left\{\sum_{i\in \iota}(a_i u_i+rh_iv_i)\Big|\vec u^\sigma=(u_1,\ldots,u_n),\,\vec v^\sigma=(v_1,\ldots,v_n),\,\forall\,\sigma\in \Sigma_{\vec b}(\vec x),\, \forall\, \vec x,\iota\right\},
	\end{equation*}
	which satisfies $\vec a,\vec h\in\{-1,1\}^n$ and $a_i u_i+h_iv_i\geq 0, \,\,\forall\, i=1,2,\ldots,n$.
	Obviously, the choice of $\vec u$, $\vec v$, $\vec a$, $\vec h$ and $\eta^{k_j}$ are finite. Thus, there exists a subsequence $\{r^{k_{j_t}}\}_{t=1}^\infty\subset \{r^{k_j}\}_{j=1}^\infty$ satisfying that for any $t_1,\,t_2\in\mathbb{Z}^+$, we have $\vec u^{k_{j_{t_1}}}=\vec u^{k_{j_{t_2}}}$, 
	$\vec v^{k_{j_{t_1}}}=\vec v^{k_{j_{t_2}}}$, $\vec a^{k_{j_{t_1}}}=\vec a^{k_{j_{t_2}}}$, $\vec h^{k_{j_{t_1}}}=\vec h^{k_{j_{t_2}}}$ and $\eta^{k_{j_{t_1}}}=\eta^{k_{j_{t_2}}}$. 
	Since $\{r^{k_{j_t}}\}_{t=1}^\infty$ increases monotonically, there exists $N_0\in\mathbb{Z}^+$, $\beta >0$ such that $|s_{\eta(m+1)}^{k_{j_{t}}}|>\beta$ for $t>N_0$, which contradicts to $r^{k_{j_{t}}}\leq 1,t\rightarrow \infty$ from Eq.~\eqref{rrss}. Thus there exist $N\in\mathbb{Z}^+$ and $r^*\in\mathbb{R}$ such that $r^k=r^*$ for any $k>N$, thereby implying that 
	\begin{align*}
		r^k &=\|\vec s^k\|_1, \\
		X_p^{k+1}&=\{\vec x\,\big|\,\vec x/ \|\vec x\|_\infty\in\sgn(\vec s^k),\,\, \|\vec x\|_p = 1\},
	\end{align*}
	derived from Proposition~\ref{prop:rs}. 
	That is, $\forall\,\vec x^{k+1}\in\partial X_p^{k+1}$, we have
	\begin{align}
		\vec x^{k+1}/ \|\vec x^{k+1}\|_\infty&\in\sgn(\vec s), \quad \forall\, s\in\partial Q_{r^k}(\vec x^{k+1}), 
		\label{eq:x1}
		\\
		S^<(\vec x^{k+1})&=\varnothing. \label{eq:x2}
	\end{align}
	%
	
	
	Now we prove $\vec x^{k+1}\in C$ and neglect the superscript $k+1$ hereafter for simplicity. 
	Suppose the contrary that there exists $i\in S^\pm(\vec x)$ satisfying 
	\begin{equation}\label{eq:ass}
		F(R_i\vec x)>F(\vec x) \quad \Leftrightarrow \quad Q_{r^k}(R_i\vec x)-Q_{r^k}(\vec x)>0
	\end{equation}
	because of $\|R_i\vec x\|_\infty=\|\vec x\|_\infty$. 
	
	On the other hand, we have
	\begin{equation}
		I(R_i\vec x)-I(\vec x)=\pm\sum\limits_{j:\{j,i\}\in E} 2w_{ij}x_j,\,\,i\in S^\pm(\vec x),
	\end{equation}
	and
	\begin{equation}
		\begin{aligned}
			N(R_i\vec x)-N(\vec x)=&-|\Delta-d_i x_i|+|\Delta+d_i x_i|\\
			=&\begin{cases}
				2d_ix_i,&\text{if }\Delta> d_i||\vec x||_{\infty},\\
				2\Delta\sign(x_i),&\text{if }\Delta \le d_i||\vec x||_{\infty},\\ 
				-2d_i x_i,&\text{if }\Delta< -d_i||\vec x||_{\infty},
			\end{cases}
		\end{aligned}
	\end{equation}
	from which, it can be derived that 
	\begin{equation}\label{eq:mmm}
		\begin{cases}
			\median(\vec x)=\median(R_i\vec x)=\{1\},&\text{if }\Delta> d_i||\vec x||_{\infty},\\
			x_i\in\median(\vec x),\,-x_i\in\median(R_i\vec x),&\text{if }\Delta\leq d_i||\vec x||_{\infty},\\
			\median(\vec x)=\median(R_i\vec x)=\{-1\},&\text{if }\Delta<-d_i||\vec x||_{\infty},
		\end{cases}
	\end{equation}
	where we have used the notation $\Delta:=\sum_{j=1}^nd_jx_j-d_ix_i$.
	
	Let $\vec u=(u_1,\ldots,u_n)\in\partial I(\vec x)$ with 
	$$
	u_i=\sum\limits_{j:\{j,i\}\in E} 2w_{ij}z_{ij},\,\,\,z_{ij} =-x_j/\norm{\vec x}\in \sgn(x_i-x_j),
	$$
	and $\vec v=(v_1,\ldots,v_n)\in\partial N(\vec x)$ with 
	$$
	v_i =\begin{cases}
		-d_i,&\text{if }\Delta> d_i||\vec x||_{\infty},\\
		-\Delta\sign(x_i),&\text{if }\Delta \le d_i||\vec x||_{\infty},\\ 
		
		d_i,&\text{if }\Delta< -d_i||\vec x||_{\infty}.
	\end{cases}
	$$
	
	Combining Eqs.~\eqref{sg:gen},  \eqref{eq:ass} and \eqref{eq:mmm} together 
	leads to 
	\begin{equation}
		\label{xisileq0}
		x_is_i=\frac{x_i u_i+r^kx_iv_i}{2\vol(V)}<0,
	\end{equation}
	which contradicts Eq.~\eqref{eq:x1}.


	Finally, we prove $\vec x$ is a local maximizer of $F(\cdot)$ over $U(\vec x)$.
	Let
	$$
	\vec y'=\frac{\norm{\vec x}}{\norm{\vec y}}  \vec y, \quad 
	g(t)=Q_{F(\vec x)}(t(\vec y'-\vec x)+\vec x), \quad \forall\,\vec y\in U(\vec x),
	$$
	and we claim that
	\begin{equation}\label{eq:g}
		g(t)-g(0)\leq 0, \quad \forall\,t\in[0,1],
	\end{equation}
	with which we are able to verify $\vec x$ is a local maximizer as follows 
	\begin{equation*}
		\begin{aligned}
			F(\vec y)-F(\vec x)=&F(\vec y')-F(\vec x)\\
			=&\frac{2\vol(V)}{2\vol(V)\|\vec y'\|_\infty-N(\vec y')}(Q_{F(\vec x)}(\vec y')-Q_{F(\vec x)}(\vec x))\\
			=&\frac{2\vol(V)}{2\vol(V)\|\vec y'\|_\infty-N(\vec y')}(g(1)-g(0))\le 0,
			\;\; \forall\,\vec y\in U(\vec x).
		\end{aligned}
	\end{equation*}

	Now we only need to prove Eq.~\eqref{eq:g}. It is obvious that $g(t)$ is linear on $[0,1]$ and $||\vec y^\prime||_{\infty}=||\vec x||_{\infty}$, and there exists $\vec s=(s_1,\ldots,s_n)$ satisfying $\vec s \in\partial Q_{F(\vec x)}(\vec x)$ and $\vec s \in\partial Q_{F(\vec x)}(\vec y^\prime)$, which implies that $(\vec s,\vec y'-\vec x)$
	is the slope of $g(t)-g(0)$, i.e., $g(t)-g(0)=t(\vec s,\vec y'-\vec x)$. With the fact that $\sign(\vec y^\prime)=\sign(\vec x)=\sign(\vec s)$, we have
	$$
	x_j (y'_j-x_j)\le 0\Rightarrow s_j(y'_j-x_j)\le 0, \quad  j=1,2,\ldots,n, 
	$$
	then the claim Eq.~\eqref{eq:g} is verified.
\end{proof}

\begin{remark}\rm
	\label{re:separate}
	We would like to point out that the subgradient selection in \textbf{CIA0} may not guarantee a similar local \blue{optimum}, which can be verified in two steps. For the first step, numerical experiments show that the solutions produced by \textbf{CIA0} on G-set
	are not contained in set $C$ with high probability.  For the second step, we can prove that there exists $\vec y\in U(\vec x)$ such that $F(\vec y)>F(\vec x)$ for any $\vec x\notin C$. The proof is briefed as follows. $\vec x\notin C$ implies that there exists $i\in S^\pm(\vec x)$ such that $F(R_i\vec x)>F(\vec x)$, i.e., $Q_{F(\vec x)}(R_i\vec x)>Q_{F(\vec x)}(\vec x)$. Thus, according to Eq.~\eqref{xisileq0}, there exists $\vec s\in \partial Q_{F(\vec x)}(\vec x)$ such that $x_is_i<0$. Let $\vec y=(y_1,\ldots,y_n)\in\mathbb{R}^n$ with $y_j=x_j$ if $j\neq i$ and $y_j=\frac{x_j}{4}$ otherwise. 
	It can be readily verified that $\vec y\in U(\vec x)$, and 
	\begin{equation*}
		\begin{aligned}
			F(\vec y)-F(\vec x)=&F(\vec y {\|\vec x\|_{\infty}}/{\|\vec y\|_{\infty}})-F(\vec x)\\
			=&\frac{2\vol(V)}{2\vol(V)\|\vec y'\|_\infty-N(\vec y')}(g(1)-g(0))\\
			=&\frac{2\vol(V)}{2\vol(V)\|\vec y'\|_\infty-N(\vec y')}\cdot\left(-\frac{3x_is_i}{4}\right)>0.
		\end{aligned}
	\end{equation*}
\end{remark}

\section{Numerical experiments}
\label{sec:num}

In this section, we conduct performance evaluation of the proposed CIAs on the graphs with positive weight in G-set\footnote{Downloaded from {\tt https://web.stanford.edu/~yyye/yyye/Gset/}} and always set the initial data $\vec x^0$ to be the maximal eigenvector of the graph Laplacian \cite{DelormePoljak1993,PoljakRendl1995}. The three bipartite graphs $G48$, $G49$, $G50$ will not be considered because their \blue{optimal} cuts can be achieved at the initial step. 
We first utilize the numerical solutions produced by MOH \cite{mahao2017} as the reference to check the quality of approximate solutions obtained by CIAs (see Section~\ref{sec:cia_ref}). Then, we carry out a fair comparison between CIAs and CirCut \cite{burer2002rank} to further investigate the computational cost (see Section \ref{sec:cia_cirah}). 
It should be pointed out that both MOH and CirCut are originally designed for the maxcut problem~\eqref{eq:maxcut} and we have to modify them to produce approximate reference solutions for the anti-Cheeger cut problem~\eqref{eq:antiCheeger} in this work. The interested readers may find more details on MOH and CirCut for the anti-Cheeger cut in Appendix \ref{app:moh} and Appendix \ref{app:rank2}, respectively.

%
%
%

\subsection{\blue{\textbf{Comparison with MOH in solution quality}}
\label{sec:cia_ref}}

\begin{table}
\centering
\caption{\small
	Numerical results for the anti-Cheeger problem by continuous iterative algorithms on G-set. 
	The \blue{objective} function values obtained with the multiple search operator heuristic method \cite{mahao2017} are chosen to be the reference (see the second column). The third, fourth and fifth columns present the \blue{objective} function values obtained with
	\textbf{CIA0}, \textbf{CIA1} and \textbf{CIA2}, respectively. 
	It can be easily seen that the numerical lower bound of the ratios between the \blue{objective} function values achieved by \textbf{CIA0} and the reference ones are only about $0.870$ (see G36); and \textbf{CIA1} improves it to $0.942$ (see G35), where we have chosen the maximum \blue{objective} function values among $100$ runs from the same initial data and set the total iteration steps $T_{\text{tot}}=100$ for each run. Combined with a population updating manner, \textbf{CIA2} further improves such numerical lower bound to {$0.990$} (see G37), where we have set the population size $L=20$,
	$T_{=}=3$, and $T_{\text{tot}}=10000$. The sixth and the last columns count the number of updating the population, denoted by $\#_P$.
	For instance, $\#_P=8$ for G37 means that \textbf{CIA2} runs $\#_P\times L\times T_{\text{tot}}=8\times 20 \times 10000 = 1600000$ iteration steps in total. \textbf{CIA2-0.1P} further improves the worst ratio to 0.994 (see G36), and achieves the reference results on G1$\sim$G5, G44, G46 and G47.
}  \label{tab:1}
\small
\setlength{\tabcolsep}{2.5mm}{
	\begin{tabular}{|c|c|c|c|cc|cc|}
		\hline
		&                             &                               &                               & \multicolumn{2}{c|}{}                                    & \multicolumn{2}{c|}{}                                    \\
		& \multirow{-2}{*}{reference} & \multirow{-2}{*}{\textbf{CIA0}}        & \multirow{-2}{*}{\textbf{CIA1}}        & \multicolumn{2}{c|}{\multirow{-2}{*}{\textbf{CIA2}}}              & \multicolumn{2}{c|}{\multirow{-2}{*}{\blue{\textbf{CIA2-0.1P}}}}             \\ \cline{2-8} 
		&                             &                               &                               &                               &                          &                               &                          \\
		\multirow{-4}{*}{graph} & \multirow{-2}{*}{value}     & \multirow{-2}{*}{value}       & \multirow{-2}{*}{value}       & \multirow{-2}{*}{value}       & \multirow{-2}{*}{$\#_P$} & \multirow{-2}{*}{value}       & \multirow{-2}{*}{$\#_P$} \\ \hline
		G1                      & 0.6062                      & 0.5764                        & 0.5967                        & 0.6031                        & 2                        & {0.6062} & 4                        \\
		G2                      & 0.6059                      & 0.5791                        & 0.5943                        & 0.6023                        & 2                        & {0.6059} & 3                        \\
		G3                      & 0.6060                      & 0.5716                        & 0.5988                        & 0.6049                        & 2                        & {0.6060} & 3                        \\
		G4                      & 0.6073                      & 0.5814                        & 0.5970                        & 0.6049                        & 3                        & {0.6073} & 4                        \\
		G5                      & 0.6065                      & 0.5836                        & 0.5994                        & 0.6050                        & 2                        & {0.6065} & 4                        \\
		G14                     & 0.6527                      & 0.5905                        & 0.6264                        & 0.6485                        & 4                        & 0.6507                        & 8                        \\
		G15                     & 0.6542                      & 0.5951                        & 0.6217                        & 0.6486                        & 5                        & 0.6517                        & 4                        \\
		G16                     & 0.6533                      & 0.5952                        & 0.6213                        & 0.6482                        & 5                        & 0.6499                        & 6                        \\
		G17                     & 0.6529                      & 0.5869                        & 0.6247                        & 0.6473                        & 7                        & 0.6499                        & 3                        \\
		G22                     & 0.6683                      & 0.6387                        & 0.6554                        & 0.6655                        & 3                        & 0.6682                        & 7                        \\
		G23                     & 0.6675                      & 0.6381                        & 0.6552                        & 0.6642                        & 3                        & 0.6666                        & 4                        \\
		G24                     & 0.6671                      & 0.6187                        & 0.6551                        & 0.6644                        & 4                        & 0.6667                        & 8                        \\
		G25                     & 0.6673                      & 0.6358                        & 0.6523                        & 0.6627                        & 3                        & 0.6670                        & 5                        \\
		G26                     & 0.6667                      & 0.6265                        & 0.6503                        & 0.6634                        & 6                        & 0.6661                        & 6                        \\
		G35                     & 0.6524                      & 0.5713                        & {\emph{0.6144}} & 0.6463                        & 12                       & 0.6487                        & 7                        \\
		G36                     & 0.6523                      & {\emph{0.5674}} & 0.6157                        & 0.6460                        & 10                       & {\emph{0.6483}} & 7                        \\
		G37                     & 0.6524                      & 0.5865                        & 0.6199                        & {\emph{0.6460}} & 8                        & 0.6486                        & 8                        \\
		G38                     & 0.6526                      & 0.5786                        & 0.6192                        & 0.6470                        & 6                        & 0.6496                        & 12                       \\
		G43                     & 0.6667                      & 0.6422                        & 0.6536                        & 0.6645                        & 3                        & 0.6665                        & 4                        \\
		G44                     & 0.6657                      & 0.6359                        & 0.6533                        & 0.6639                        & 2                        & {0.6657} & 4                        \\
		G45                     & 0.6661                      & 0.6352                        & 0.6529                        & 0.6623                        & 3                        & 0.6655                        & 4                        \\
		G46                     & 0.6655                      & 0.6371                        & 0.6530                        & 0.6617                        & 5                        & {0.6655} & 6                        \\
		G47                     & 0.6663                      & 0.6265                        & 0.6511                        & 0.6631                        & 6                        & {0.6663} & 4                        \\
		G51                     & 0.6511                      & 0.5956                        & 0.6248                        & 0.6467                        & 5                        & 0.6490                        & 5                        \\
		G52                     & 0.6508                      & 0.5935                        & 0.6205                        & 0.6471                        & 6                        & 0.6488                        & 7                        \\
		G53                     & 0.6510                      & 0.5915                        & 0.6264                        & 0.6459                        & 3                        & 0.6476                        & 3                        \\
		G54                     & 0.6511                      & 0.5866                        & 0.6279                        & 0.6471                        & 5                        & 0.6474                        & 4                        \\ \hline
	\end{tabular}
}
\end{table}

This section presents and analyzes the approximate solutions produced by CIAs on G-set.
We have tried $p=1,2,\infty$ in Eq.~\eqref{eq:twostep_x2}
and found that, as we also expected in presenting the Dinkelbach iteration~\eqref{iter0}, the results for different  $p$ $(=1,2,\infty)$ are comparable. Hence we only report the results for $p=1$ below. 
Table~\ref{tab:1} presents the \blue{objective} function values obtained with the proposed three kinds of CIAs.
In view of the unavoidable randomness in determining both $\vec x^{k+1}$ and $\vec s^{k+1}$,
we re-run \textbf{CIA0} (resp.~\textbf{CIA1}) 100 times from the same initial data, 
and the third (resp.~fourth) column of Table~\ref{tab:1} records the maximum \blue{objective} function values among these $100$ runs. We can see that the numerical lower bound of the ratios between the \blue{objective} function values achieved by \textbf{CIA0} and the reference ones are only about $0.870$ (see G36); and \textbf{CIA1} improves it to $0.942$ (see G35). That is, \textbf{CIA1} has much better subgradient selection than \textbf{CIA0}, which has been already pointed out in Remark~\ref{re:separate}. Also with such careful selection of the subgradient, \textbf{CIA1} has a nice property of local convergence within finite steps as stated in Theorem~\ref{thm:conver_3}. However, at the same time, it also means that \textbf{CIA1} may converge fast to local optima and thus easily gets stuck in improving the solution quality further. This is clearly shown in Fig.~\ref{fig:smallsteps} where we have plotted the history of the maximum objective function values chosen among 100 runs of \textbf{CIA1} against the iterative steps on six typical graphs. In fact, \textbf{CIA1} stops improving the solution quality after 53 steps for G1, 
30 for G14, 25 for G22, 30 for G35, 17 for G43 and 39 for G51. This also explains why we choose a small $T_{\text{tot}}=100$ 
for both \textbf{CIA0} and \textbf{CIA1}. 

According to Proposition \ref{prop:rs} and Theorem \ref{thm:conver_3}, \textbf{CIA1} increases monotonically and converges to a local optimum, thus it theoretically improves the results of \textbf{CIA0}, which can be validated numerically. We re-run \textbf{CIA0} and \textbf{CIA1} 20 times with the initial data to be the maximal eigenvector of the graph Laplacian for each graph in G-set. The solution generated in each run by \textbf{CIA0} is used as initial data and further optimized by \textbf{CIA1}. The procedure is denoted as \textbf{CIA0+CIA1}. We set the total iteration steps $T_{\text{tot}}=100$ in each run for all the three algorithms. \blue{ Fig. \ref{fig:compare0} shows the minimum, mean and maximum anti-Cheeger cut values obtained by \textbf{CIA0}, \textbf{CIA1} and \textbf{CIA0+CIA1}.  Among all $27\times 20=540$ results, the percentage of improvements is more than $98\%$. Moreover, \textbf{CIA1} outperforms \textbf{CIA0} in all graph instances, and its approximate solutions are of comparable quality to \textbf{CIA0+CIA1}.}

\begin{figure}
\centering
\includegraphics[scale=0.6]{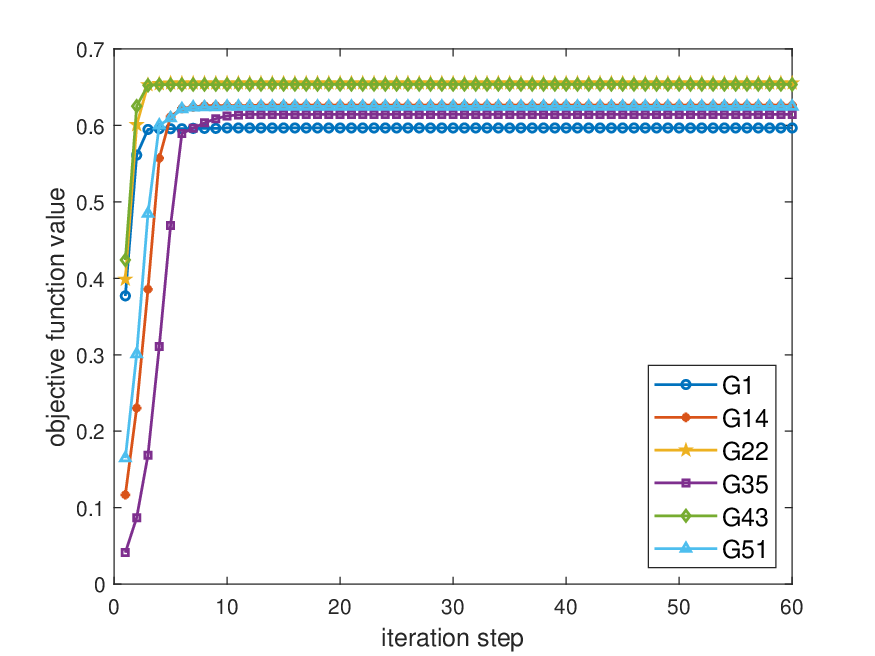}
\caption{\small The maximum objective function value chosen among 100 runs of \textbf{CIA1} from the same initial data {\it v.s.} iterative steps.  We can clearly observe that \textbf{CIA1} converges fast to a local optimum after a few iteration steps.  It stops increasing the maximum objective function values (see the fourth column of Table~\ref{tab:1}) after 53 steps for G1, 
	30 for G14, 25 for G22, 30 for G35, 17 for G43 and 39 for G51.}
\label{fig:smallsteps}
\end{figure}

\begin{figure}
\centering
\includegraphics[scale=0.3]{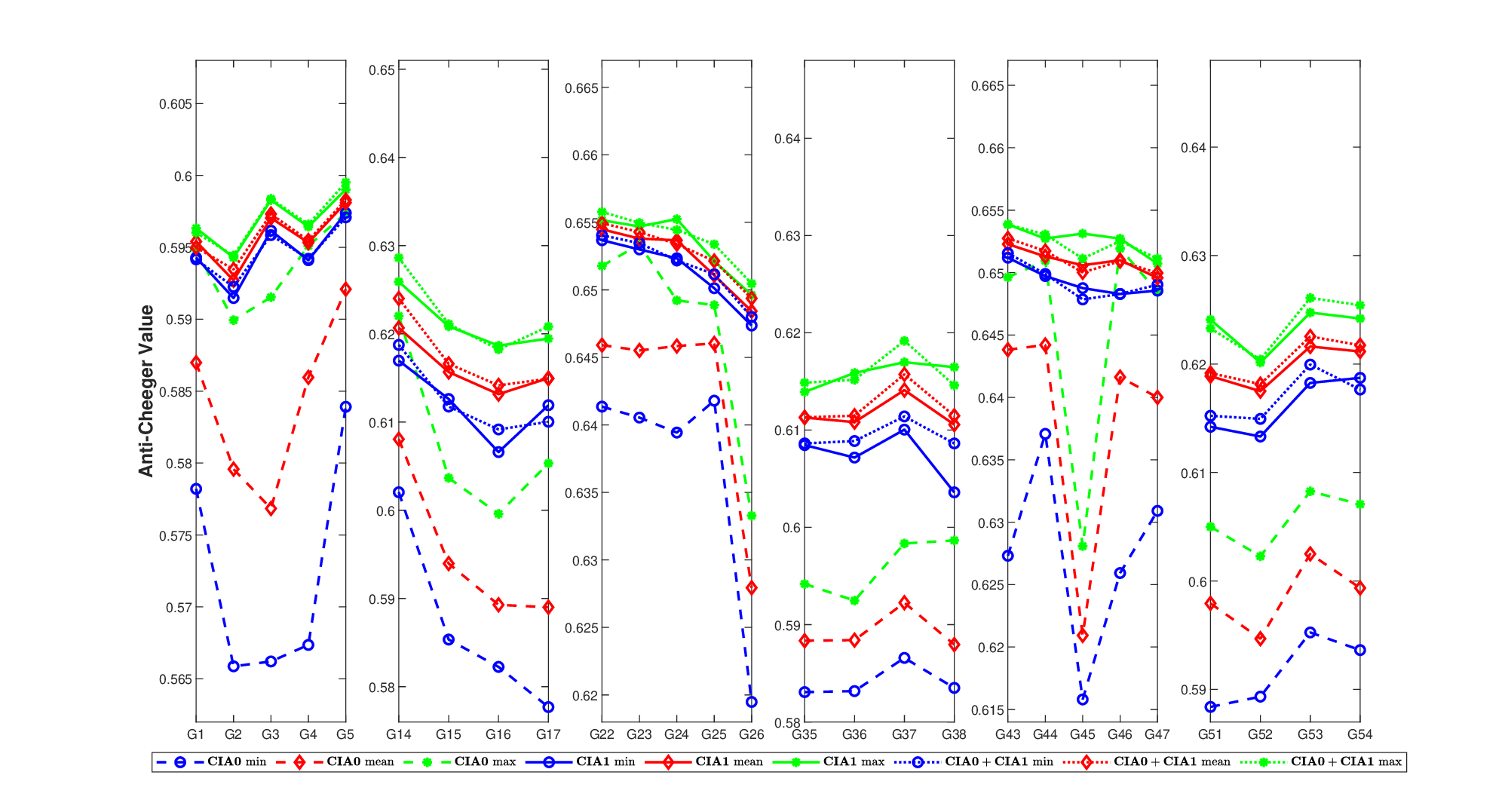}
\caption{\small \blue{ The minimum, mean, and maximum anti-Cheeger cut values produced by \textbf{CIA0}, \textbf{CIA1} and \textbf{CIA0+CIA1}. \textbf{CIA0+CIA1} means the output of \textbf{CIA0} serves as the input to \textbf{CIA1} for possible solution quality improvements. We find that, \textbf{CIA1} improves the results generated by \textbf{CIA0} in most cases, and its approximate solutions are of comparable quality to \textbf{CIA0+CIA1}.}}
\label{fig:compare0}
\end{figure}

Based on the similarity between the anti-Cheeger cut problem~\eqref{eq:antiCheeger-I(x)} and the maxcut problem~\eqref{eq:maxcut-continuous}, \textbf{CIA2}, the flowchart of which is given in Fig.~\ref{flowchart}, 
suggests a switch to the SI iterations for the latter when \textbf{CIA1} cannot increase the objective function values of the former within $T_=$ successive iterative steps. Such switch can be considered to be a kind of local breakout technique for further improving the solution quality. We have implemented \textbf{CIA2} in a population updating manner. 
In each round, the population contains $L$ numerical solutions, and its updating means precisely that $L$ independent \textbf{CIA2} runs from the same initial data occur simultaneously. We record the numerical solution which achieves the maximum objective function value among these $L$ runs,  adopt it as the initial data for the next round and define its objective function value to be the value of population. The population updating will not stop until the value of population keeps unchanged and we use $\#_P$ to count the number of such updating. 
After setting $L=20$ and $T_==3$, the ratios between the maximum \blue{objective} function values (i.e., the final values of population, see the fifth column of Table~\ref{tab:1}) and the reference ones can be increased by \textbf{CIA2} to at least {$0.990$} (see G37),
where a large $T_\text{tot}=10000$ is chosen to allow sufficient perturbations from local optima. 
In order to further improve the solution quality, we introduce some simple random perturbations in \textbf{CIA2}: randomly select and move $\gamma$ points from the original subset to the other subset, where $\gamma$ is used to control the perturbation strength. $\gamma$ is dynamically determined as randn($0.1|V|$, $0.3|V|$), where randn($0.1|V|$, $0.3|V|$) denotes a random integer between $0.1|V|$ and $0.3|V|$. The simple random perturbation is performed with probability 0.1 (\textbf{0.1P}) when SI or \textbf{CIA1} falls into a local optimum,
where the probability 0.1 reflects our numerical finding that applying such simple random perturbations infrequently with the population updating manner produces better approximate solutions. \blue{The resulting algorithm is denoted as \textbf{CIA2-0.1P}, and its numerical results are displayed in the last two columns of Table \ref{tab:1}.
We are able to see there that the minimum ratio between these values and the reference values is improved to $0.994$ (see G36).} In a word, Table~\ref{tab:1} shows evidently, on all 27 tested graphs,
the objective function values produced by \textbf{CIA2} are larger than those by \textbf{CIA1},
and the latter are larger than those by \textbf{CIA0} as well.
Moreover, the increased values from \textbf{CIA2} to \textbf{CIA1}, which are obtained with breaking out of local optima by the maxcut, are usually less than those from \textbf{CIA0} to \textbf{CIA1},
which are achieved with an appropriate subgradient selection.


Let's consider the computational time of CIAs. Besides some extra work on calculating $N(\vec x)$ and choosing
$\vec v\in\partial N(\vec x)$,  both of which can be easily achieved by sorting,  the detailed implementation of \textbf{CIA0} and \textbf{CIA1} is almost the same as that of SI for the maxcut problem,
and so does the cost. In this regard, an anti-Cheeger iteration and a maxcut iteration, both adopted in \textbf{CIA2} as shown in Fig.~\ref{flowchart}, are considered to have the same cost. 
The interested readers are referred to \cite{SZZ2018} for more details on the implementation and cost analysis, which are skipped here for simplicity.
Although \textbf{CIA0} and \textbf{CIA1} have the same cost, but the cost of the population implementation of \textbf{CIA2}
and \textbf{CIA2-0.1P} may be a little bit expensive
in some occasions. Taking G37 as an example, \textbf{CIA2} takes $\#_P\times L\times T_{\text{tot}} = 1.6\times 10^6$ iteration steps in total to reach $0.6460$ (the ratio is {$0.990$}) while \textbf{CIA2} only needs $100\times100=10^4$ to reach $0.6199$ (the ratio is $0.951$). That is, an increase by four percentage points in the ratio costs about a hundredfold price. However, a good news is the population updating for \textbf{CIA2} and \textbf{CIA2-0.1P} is a kind of embarrassingly parallel computation
and thus can be perfectly accelerated with the multithreading technology. \blue{The run times for all the algorithms in Table \ref{tab:1} are presented in Table \ref{tab:time1}.}


\begin{table}
\centering
\caption{\small
	\blue{The wall-clock time in seconds for CIAs in Table \ref{tab:1}. The time cost of \textbf{CIA0} or \textbf{CIA1} in each run is within 2 seconds. Due to the population updating manner, the run times of \textbf{CIA2} and \textbf{CIA2-0.1P}  are related to the scales of underlying graphs, and are much longer on the graph instances with 2000 vertices. }  
}  \label{tab:time1}
\small
\setlength{\tabcolsep}{2.5mm}{
	\begin{tabular}{|c|c|c|c|c|c|c|}
		\hline
		\multirow{4}{*}{graph} & \multirow{4}{*}{$|V|$} & \multirow{4}{*}{$|E|$} & \multirow{2}{*}{\textbf{CIA0}} & \multirow{2}{*}{\textbf{CIA1}} & \multirow{2}{*}{\textbf{CIA2}} & \multirow{2}{*}{\textbf{CIA2-0.1P}} \\
		&                        &                        &                       &                       &                       &                            \\ \cline{4-7} 
		&                        &                        & \multirow{2}{*}{time} & \multirow{2}{*}{time} & \multirow{2}{*}{time} & \multirow{2}{*}{time}      \\
		&                        &                        &                       &                       &                       &                            \\ \hline
		G1                     & 800                    & 19176                  & 0.58                  & 0.69                  & 106.56                & 189.60                     \\
		G2                     & 800                    & 19176                  & 0.49                  & 0.47                  & 100.83                & 143.23                     \\
		G3                     & 800                    & 19176                  & 0.48                  & 0.45                  & 176.69                & 145.50                     \\
		G4                     & 800                    & 19176                  & 0.47                  & 0.45                  & 97.36                 & 176.26                     \\
		G5                     & 800                    & 19176                  & 0.48                  & 0.46                  & 96.09                 & 177.31                     \\
		G14                    & 800                    & 4694                   & 0.26                  & 0.27                  & 192.73                & 375.77                     \\
		G15                    & 800                    & 4661                   & 0.26                  & 0.26                  & 237.43                & 197.67                     \\
		G16                    & 800                    & 4672                   & 0.27                  & 0.27                  & 235.83                & 284.68                     \\
		G17                    & 800                    & 4667                   & 0.26                  & 0.26                  & 189.36                & 152.18                     \\
		G22                    & 2000                   & 19990                  & 1.62                  & 1.28                  & 474.82                & 1096.16                    \\
		G23                    & 2000                   & 19990                  & 1.60                  & 1.25                  & 478.85                & 624.55                     \\
		G24                    & 2000                   & 19990                  & 1.63                  & 1.26                  & 794.32                & 1262.92                    \\
		G25                    & 2000                   & 19990                  & 1.65                  & 1.28                  & 789.00                & 796.85                     \\
		G26                    & 2000                   & 19990                  & 1.63                  & 1.27                  & 639.60                & 956.72                     \\
		G35                    & 2000                   & 11778                  & 1.63                  & 1.10                  & 1233.38               & 1249.14                    \\
		G36                    & 2000                   & 11766                  & 1.61                  & 1.10                  & 1047.16               & 1237.33                    \\
		G37                    & 2000                   & 11785                  & 1.64                  & 1.05                  & 1069.36               & 1410.71                    \\
		G38                    & 2000                   & 11779                  & 1.74                  & 1.12                  & 1390.01               & 2109.29                    \\
		G43                    & 1000                   & 9990                   & 0.45                  & 0.40                  & 126.94                & 239.96                     \\
		G44                    & 1000                   & 9990                   & 0.44                  & 0.41                  & 123.76                & 236.43                     \\
		G45                    & 1000                   & 9990                   & 0.45                  & 0.41                  & 236.62                & 236.95                     \\
		G46                    & 1000                   & 9990                   & 0.44                  & 0.40                  & 128.33                & 339.04                     \\
		G47                    & 1000                   & 9990                   & 0.45                  & 0.40                  & 196.62                & 236.70                     \\
		G51                    & 1000                   & 5909                   & 0.39                  & 0.35                  & 196.06                & 321.51                     \\
		G52                    & 1000                   & 5916                   & 0.39                  & 0.35                  & 315.76                & 437.26                     \\
		G53                    & 1000                   & 5914                   & 0.42                  & 0.38                  & 379.37                & 200.78                     \\
		G54                    & 1000                   & 5916                   & 0.39                  & 0.35                  & 136.69                & 254.24                     \\ \hline
	\end{tabular}
}
\end{table}

\subsection{\textbf{Comparison with CirCut in computational cost}}
\label{sec:cia_cirah}

In this section, we compare the performance of CirCut and \textbf{CIA2} without using the population updating manner 
in terms of solution quality and computational cost in two scenarios: with or without perturbation. Let CirCut0 be the version without random perturbation and angular representation and only include one time of line-search and Procedure-CUT (see Fig.~\ref{fig::circut}). We will compare CirCut0 with \textbf{CIA2} since the latter only includes the iterations of the maxcut and the anti-Cheeger cut. 
On the other hand, in accordance with the fact that CirCut is optimized with the random perturbation (see Eq.~\eqref{eq:rp-circut}), we also add simple random perturbations (\textbf{P}) into \textbf{CIA2}: Randomly select $\gamma$ $\in$ randn($0.1|V|$, $0.3|V|$) points, each of which is moved from its current subset to the other. When SI in \textbf{CIA2} (see Fig.~\ref{flowchart}) converges to a local optimum of the maxcut, the random perturbation should be applied. The resulting algorithm is denoted as \textbf{CIA2-P} and competes with CirCut. If CirCut stops with random perturbation for an average of $\bar{T_p}$ times, then each run of \textbf{CIA2-P} performs simple random perturbations for $\bar{T_p}$ times as well. 
We perform $20$ runs of repeated experiments for each graph as well as for each algorithm and set $N=10$, $T_{=}=3$.
Other required parameters for the line-search and random perturbation processes in CirCut are chosen in the same ways with the implementations in Github\footnote{see {\tt https://github.com/MQLib/MQLib/tree/master/src/heuristics/maxcut}}. We implement all above-mentioned algorithms in Matlab (r2019b) on the computing platform of 2*Intel Xeon E5-2650-v4 (2.2GHz, 30MB Cache, 9.6GT/s QPI Speed, 12 Cores, 24 Threads) with 128GB Memory.

%
%
%


\blue{Fig. \ref{fig::compare} plots the minimum, mean and maximum values ??of the approximate solutions obtained by 
\textbf{CIA2}, \textbf{CIA2-P}, CirCut0 and CirCut. We are able to observe there that \textbf{CIA2} is better than CirCut0 on all graphs except G15,  and the approximate solutions obtained by \textbf{CIA2-P}  are competitive with CirCut.  In particular, \textbf{CIA2-P} generates better results than CirCut on G2$\sim$G5, G22$\sim$G26, G43$\sim$G47, whose $|E|/|V|$ are higher.} The last column of Table \ref{tab:2} counts the average number of random perturbations performed by CirCut. \blue{Table \ref{tab:2} also provides the run times of all the algorithms, and verifies that \textbf{CIA2} and \textbf{CIA2-P} run much faster.} 

\begin{figure}[htbp]
\centering
\caption{\small \blue{The minimum, mean, and maximum anti-Cheeger cut values obtained by CirCut0 and \textbf{CIA2} are presented in (a), and others obtained by CirCut and \textbf{CIA2-P} are provided in (b). The population updating manner is not used here.}}
\label{fig::compare}
\subfigure[\textbf{CIA2} {\it vs} CirCut0.]{\includegraphics[scale=0.3]{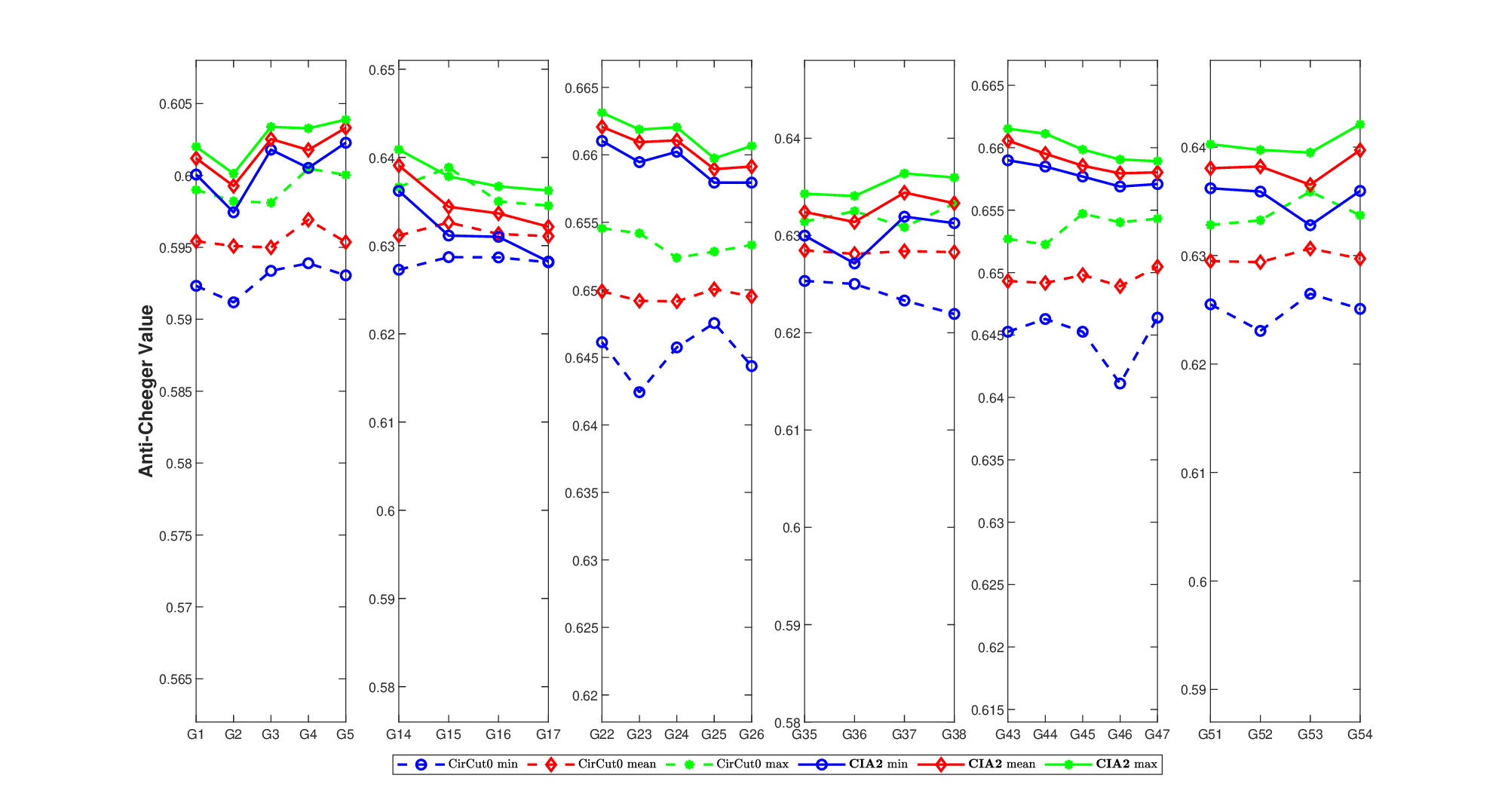}}
\subfigure[\textbf{CIA2-P} {\it vs} CirCut.]{\includegraphics[scale=0.3]{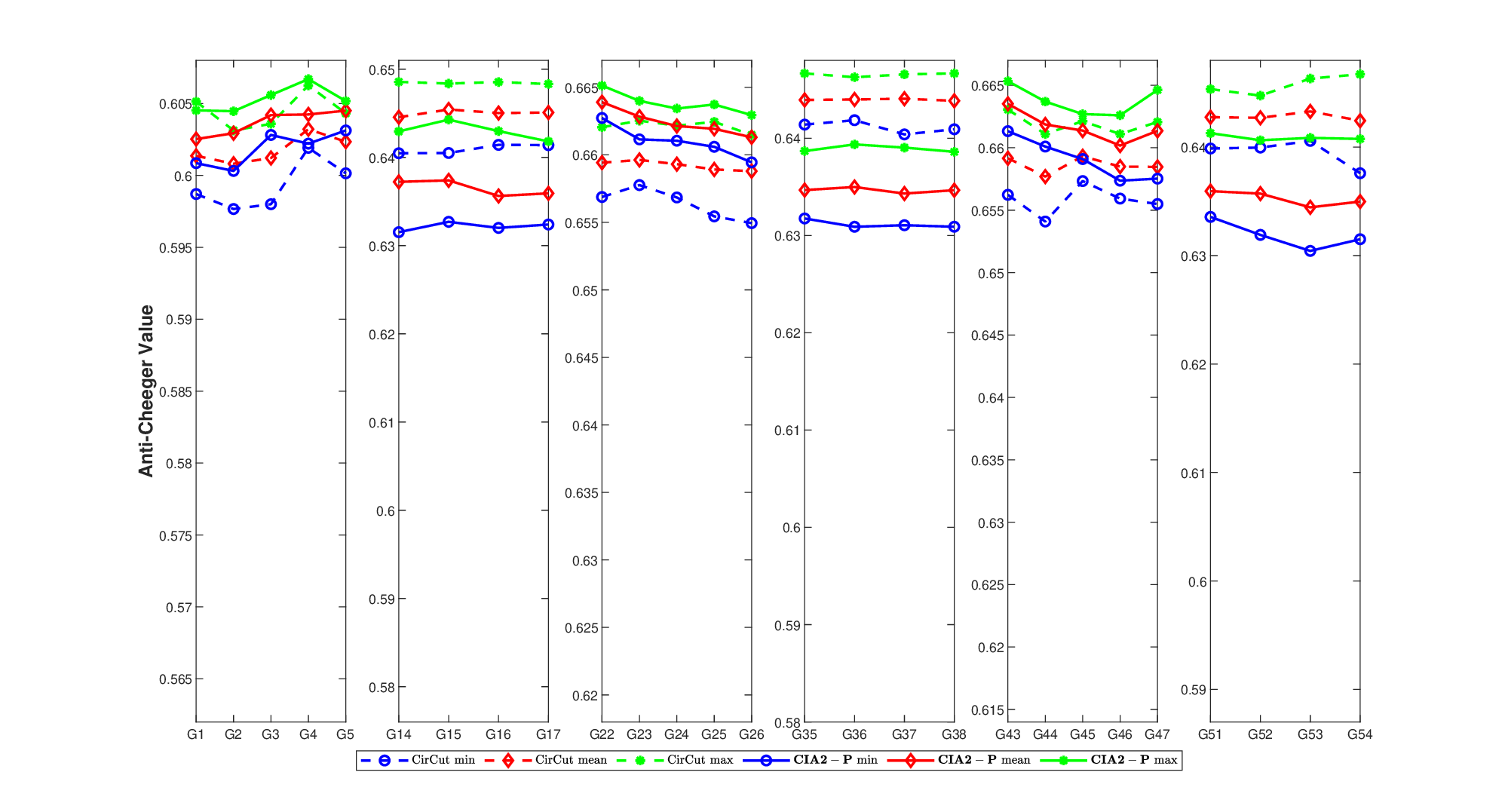}}
\end{figure}

\begin{table}[htbp]
\centering
\caption{\small \blue{The wall-clock time in seconds for \textbf{CIA2}, \textbf{CIA2-P}, CirCut0 and CirCut.
		The same number of perturbations $\bar{T}_P$ are used in CirCut and \textbf{CIA2-P}. It is clearly shown that \textbf{CIA2} and \textbf{CIA2-P} run much faster than CirCut0 and CirCut, respectively. The population updating manner is not used here.}}
\small     
\setlength{\tabcolsep}{2.5mm}{
	\begin{tabular}{|c|c|c|c|c|c|cc|}
		\hline
		\multirow{4}{*}{graph} & \multirow{4}{*}{$|V|$} & \multirow{4}{*}{$|E|$} & \multirow{2}{*}{\textbf{CIA2}} & \multirow{2}{*}{CirCut0} & \multirow{2}{*}{\textbf{CIA2-P}} & \multicolumn{2}{c|}{\multirow{2}{*}{CirCut}}   \\
		&                        &                        &                       &                         &                        & \multicolumn{2}{c|}{}                         \\ \cline{4-8} 
		&                        &                        & \multirow{2}{*}{time} & \multirow{2}{*}{time}   & \multirow{2}{*}{time}  & \multirow{2}{*}{time} & \multirow{2}{*}{$\bar{T}_p$} \\
		&                        &                        &                       &                         &                        &                       &                       \\ \hline
		G1                     & 800                    & 19176                  & 1.10                  & 2.12                    & 4.08                   & 44.80                 & 32                    \\
		G2                     & 800                    & 19176                  & 1.22                  & 2.09                    & 3.43                   & 40.46                 & 29                    \\
		G3                     & 800                    & 19176                  & 1.24                  & 2.07                    & 3.11                   & 34.04                 & 25                    \\
		G4                     & 800                    & 19176                  & 1.10                  & 2.48                    & 4.17                   & 37.77                 & 28                    \\
		G5                     & 800                    & 19176                  & 1.14                  & 2.15                    & 3.79                   & 39.44                 & 30                    \\
		G14                    & 800                    & 4694                   & 0.52                  & 1.35                    & 3.26                   & 23.02                 & 28                    \\
		G15                    & 800                    & 4661                   & 0.48                  & 1.36                    & 3.66                   & 26.65                 & 32                    \\
		G16                    & 800                    & 4672                   & 0.49                  & 1.48                    & 3.62                   & 26.06                 & 32                    \\
		G17                    & 800                    & 4667                   & 0.48                  & 1.44                    & 4.11                   & 27.78                 & 35                    \\
		G22                    & 2000                   & 19990                  & 3.32                  & 5.36                    & 16.42                  & 107.87                & 31                    \\
		G23                    & 2000                   & 19990                  & 3.32                  & 4.62                    & 16.58                  & 105.89                & 31                    \\
		G24                    & 2000                   & 19990                  & 3.33                  & 5.06                    & 15.69                  & 97.20                 & 29                    \\
		G25                    & 2000                   & 19990                  & 3.33                  & 5.07                    & 14.90                  & 94.60                 & 28                    \\
		G26                    & 2000                   & 19990                  & 3.44                  & 5.14                    & 18.43                  & 118.50                & 35                    \\
		G35                    & 2000                   & 11778                  & 2.61                  & 5.77                    & 20.24                  & 153.80                & 38                    \\
		G36                    & 2000                   & 11766                  & 2.59                  & 5.58                    & 25.77                  & 215.45                & 49                    \\
		G37                    & 2000                   & 11785                  & 2.53                  & 5.91                    & 21.23                  & 180.38                & 44                    \\
		G38                    & 2000                   & 11779                  & 2.67                  & 6.15                    & 19.53                  & 170.83                & 42                    \\
		G43                    & 1000                   & 9990                   & 0.93                  & 1.87                    & 4.76                   & 36.53                 & 33                    \\
		G44                    & 1000                   & 9990                   & 0.90                  & 1.86                    & 3.79                   & 27.95                 & 25                    \\
		G45                    & 1000                   & 9990                   & 0.97                  & 2.03                    & 4.56                   & 34.44                 & 31                    \\
		G46                    & 1000                   & 9990                   & 0.92                  & 1.88                    & 4.62                   & 34.07                 & 31                    \\
		G47                    & 1000                   & 9990                   & 0.92                  & 1.90                    & 4.15                   & 30.53                 & 28                    \\
		G51                    & 1000                   & 5909                   & 0.69                  & 2.21                    & 5.29                   & 38.21                 & 32                    \\
		G52                    & 1000                   & 5916                   & 0.70                  & 2.13                    & 5.03                   & 38.13                 & 31                    \\
		G53                    & 1000                   & 5914                   & 0.70                  & 2.33                    & 5.71                   & 41.30                 & 36                    \\
		G54                    & 1000                   & 5916                   & 0.68                  & 2.12                    & 5.51                   & 40.31                 & 35                    \\ \hline
	\end{tabular}
}
\label{tab:2}
\end{table}

\section{Conclusion and outlook}
\label{sec:conclusion}

Based on an equivalent continuous formulation, we proposed three continuous iterative algorithms (CIAs) for the anti-Cheeger cut problem, in which the \blue{objective} function values are monotonically updated and all the subproblems have explicit analytic solutions. With a careful subgradient selection, we were able to prove the iteration points converge to local optima in finite steps.  Combined with the maxcut iterations for
breaking out of local optima, the solution quality was further improved thanks to the similarity between the anti-Cheeger cut problem and the maxcut problem.
The numerical solutions obtained by our CIAs on G-set 
are of comparable quality to those by an advanced heuristic combinational algorithm. 
We will continue to explore the intriguing mathematical characters and useful relations in both graph cut problems, and use them for developing more efficient algorithms.

\par{\bf Acknowledgement.}
This research was supported by the National Key R~\&~D Program of China (No.~2022YFA1005102) and the National Natural Science Foundation of China (Nos.~12325112, 12288101, 11822102).
SS is partially supported by Beijing Academy of Artificial Intelligence (BAAI).
The authors would like to thank Dr.~Weixi Zhang for helpful discussions and useful comments.
\vskip2mm

\clearpage
\appendix
\section{Multiple search operator heuristic for the anti-Cheeger cut}
\label{app:moh}
The multiple search operator heuristic (MOH) method for the anti-Cheeger cut is based on MOH for the maxcut (see Algorithm 1 in \cite{mahao2017}), which is a state-of-the-art algorithm for the maxcut. MOH involves 5 search operators, consisting of local search, tabu search and perturbation. During the process of solving the maxcut, the variety of heuristic operators ensure that the neighborhood of a solution is widely explored, resulting in obtaining high-quality solution.

Since the original MOH method is specifically designed for $h_{\max}$, we make a slight modification for ensuring that the final solution approximates an anti-Cheeger cut. In the process of applying Algorithm 1 in \cite{mahao2017}, when the solution is updated by some search operator, we record the anti-Cheeger cut value synchronously. Suppose that MOH generates $M$ cuts: $(S_1,S_1^c)$, $(S_2,S_2^c)$, $\ldots$, $(S_M,S_M^c)$. We first select $(S_*,S_*^c)$ satisfying 
\[
(S_*,S_*^c)\in\mathop{\argmax}\limits_{(S,S^c)\in\left\{(S_i,S_i^c)\right\}_{i=1}^M}\left\{\frac{\cut(S)}{\max\{\vol(S),\vol(S^c)\}}\right\},
\] 
and then perform a greedy local search in the neighborhood of $(S_*,S_*^c)$ to make sure that the final solution is in $C$ (see Eq.~\eqref{eq:C}). It indicates that the essential idea of the modified MOH is to search for the anti-Cheeger cut in the neighborhood of the high-quality solutions for the maxcut.

The algorithm is implemented in C++ and compiled using GNU g++ with the optimization flag "-O2" on the same computing platform used by CirCut that mentioned in Section \ref{sec:cia_cirah}. We conduct 40 runs of repeated experiments on each graph instance. Each run is computed for 30 minutes, and the best anti-Cheeger cut value among the 40 results is considered the reference value. The numerical results are shown in Table \ref{tab:ref}.
According to Eq.~\eqref{eq:ah_max}, we may regard the best known maxcut value achieved by MOH as an experimental upper bound of $\ah(G)$,
and for each graph in G-set, this upper bound is extremely close to the reference value. The ratio between the reference values and the corresponding upper bounds is at least $0.9993$ (see G36), and eleven of them even reach exactly $1$. That is, in some sense,
the obtained reference values for $\ah$ on G-set are reliable.

\begin{table}[htbp]
	\centering
	\caption{\blue{Reference values and their equivalent fractional forms   for the anti-Cheeger cut by MOH on G-set (see the second column). For each graph instance $G$, the best known value for $h_{\max}(G)$ is shown in the third column, which is considered as the corresponding upper bound for $\ah(G)$. The ratio obtained by calculating $\ah(G)/h_{\max}(G)$ is represented in the last column.}}
	\label{tab:ref}
	\small
	\centering
	\setlength{\tabcolsep}{2mm}{
		\begin{tabular}{|c|c|c|c|}
			\hline
			\multirow{2}{*}{graph} & $\ah(G)$                    & $h_{\max}(G)$           & \multirow{2}{*}{ratio} \\
			& reference            & best known  &                        \\ \hline
			G1                     & 0.6062 (11624/19176) & 11624/19176 & 1                      \\
			G2                     & 0.6059 (11619/19177) & 11620/19176 & 0.999862               \\
			G3                     & 0.6060 (11620/19176) & 11622/19176 & 0.999828               \\
			G4                     & 0.6073 (11646/19178) & 11646/19176 & 0.999896               \\
			G5                     & 0.6065 (11630/19176) & 11631/19176 & 0.999914               \\
			G14                    & 0.6527 (3064/4694)   & 3064/4694   & 1                      \\
			G15                    & 0.6542 (3050/4662)   & 3050/4661   & 0.999785               \\
			G16                    & 0.6533 (3052/4672)   & 3052/4672   & 1                      \\
			G17                    & 0.6529 (3047/4667)   & 3047/4667   & 1                      \\
			G22                    & 0.6683 (13359/19991) & 13359/19990 & 0.99995                \\
			G23                    & 0.6675 (13343/19991) & 13344/19990 & 0.999875               \\
			G24                    & 0.6671 (13336/19990) & 13337/19990 & 0.999925               \\
			G25                    & 0.6673 (13340/19990) & 13340/19990 & 1                      \\
			G26                    & 0.6667 (13328/19990) & 13328/19990 & 1                      \\
			G35                    & 0.6524 (7685/11779)  & 7687/11778  & 0.999655               \\
			G36                    & 0.6523 (7676/11768)  & 7680/11766  & 0.999309               \\
			G37                    & 0.6524 (7689/11785)  & 7691/11785  & 0.99974                \\
			G38                    & 0.6526 (7687/11779)  & 7688/11779  & 0.99987                \\
			G43                    & 0.6667 (6660/9990)   & 6660/9990   & 1                      \\
			G44                    & 0.6657 (6650/9990)   & 6650/9990   & 1                      \\
			G45                    & 0.6661 (6654/9990)   & 6654/9990   & 1                      \\
			G46                    & 0.6655 (6649/9991)   & 6649/9990   & 0.9999                 \\
			G47                    & 0.6663 (6657/9991)   & 6657/9990   & 0.9999                 \\
			G51                    & 0.6511 (3848/5910)   & 3848/5909   & 0.999831               \\
			G52                    & 0.6508 (3851/5917)   & 3851/5916   & 0.999831               \\
			G53                    & 0.6510 (3850/5914)   & 3850/5914   & 1                      \\
			G54                    & 0.6511 (3852/5916)   & 3852/5916   & 1                      \\ \hline
		\end{tabular}
	}
\end{table}

\clearpage
\section{Rank-two relaxation for the anti-Cheeger cut}
\label{app:rank2}
CirCut is an efficient algorithm for solving the maxcut by rank-two relaxation \cite{burer2002rank}, which is a well known continuous framework for graph bipartitioning. Due to the high structural similarity between the anti-Cheeger cut and the maxcut, CirCut is expected to perform well on the anti-Cheeger cut. CirCut is an iterative algorithm (see Algorithm 1 in \cite{burer2002rank}), each round of which is described in Fig. \ref{fig::circut}. For the convenience of description, we use the notations in \cite{burer2002rank} unless otherwise specified. Starting  from an initial point $\vec \theta^0$, solve the rank-two relaxation 
\begin{equation}
	\label{eq:rank2-anti}
	\max_{\vec \theta}\quad h(\vec \theta)=\frac{\sum_{\{i, j\} \in E} w_{ij}\left(1-\cos \left(\theta_i-\theta_j\right)\right)}{\sum_{i=1}^n d_i+\sqrt{\sum_{i=0}^n \sum_{j=0}^n d_i d_j \cos \left(\theta_i-\theta_j\right)}}
\end{equation} 
for the anti-Cheeger cut. Suppose that we obtain a local optimum $\vec \theta$ for Eq. \eqref{eq:rank2-anti}, we apply Procedure-CUT (see Algorithm \ref{alg:procedure-cut}) to generate a binary cut $\vec x^*$, which serves as a rounding method for the anti-Cheeger cut. It is mentioned that simply displaying the objective function $F(\vec x)$ with $I(\vec x)$ (lines 4-6 in Algorithm \ref{alg:procedure-cut}) yields the rounding method for the maxcut (see Procedure-CUT in \cite{burer2002rank}). Set $\bar{\vec \theta}$ to be the angular representation of $\vec x^*$, namely
\begin{equation}
	{\bar{\theta}}_i=\left\{\begin{array}{cl}
		0, & \text { if } x^*_i=+1, \\
		\pi, & \text { if } x^*_i=-1.
	\end{array}\right.
\end{equation}
The random perturbation for $\bar{\vec \theta}$ is
\begin{equation}
	\label{eq:rp-circut}
	\bar{\vec \theta}+0.2\pi\operatorname{rand}[-1,1]^n,
\end{equation}
which derives the initial solution $\vec \theta^0$ for the next round.

Instead of applying the rank-two relaxation of the anti-Cheeger cut (see Eq. \eqref{eq:rank2-anti}), we find that the rank-two relaxation $\min f(\vec\theta)$ corresponding to the maxcut generates better results with 
\begin{equation}
	\label{eq:rank2-max}
	f(\vec \theta)=\sum_{\{i,j\}\in E}w_{ij}\cos(\theta_i-\theta_j).
\end{equation}
The cost of computing $h(\vec\theta)$ is $O(n^2)$, which causes trouble to the efficiency of computing. However, a simple backtracking Armijo line-search algorithm for solving $\min f(\vec\theta)$ (see Eq. \eqref{eq:rank2-max}) runs much faster. The similar idea is also applicable to the max bisection, which maximizes the cut value $I(\vec x)$ with the constraint $\sum_{i=1}^n x_i=0$. Both the anti-Cheeger cut and the max bisection can be regarded as balanced versions of the maxcut. It has been already pointed out in \cite{burer2002rank} that solving $\min f(\vec\theta)$ without the constraint $\vec e^T\cos(T(\vec \theta))\vec e=0$ obtains better bisections.

\begin{figure}[htbp]
	\centering
	\begin{tikzpicture}
		\matrix (m) [matrix of math nodes,row sep=3em,column sep=14em,minimum width=2em]
		{
			\vec \theta^0 & \vec \theta \\
			\bar{\vec \theta} & \vec x^* \\};
		\path[-stealth]
		(m-2-1) edge node [left] {Random perturbation} (m-1-1)
		(m-1-1.east|-m-1-2.west) edge node [above] {Rank-two relaxation solver} (m-1-2)
		(m-2-2.west|-m-2-1.east) edge node [below] {Angular representation} (m-2-1)
		(m-1-2) edge node [right] {Procedure-CUT} (m-2-2);
	\end{tikzpicture}
	\caption{Flow chart of CirCut per round for the anti-Cheeger cut.}
	\label{fig::circut}
\end{figure}

\begin{algorithm}
	\caption{\small Pseudo-code of Procedure-CUT. \textbf{Input:} $\vec \theta$. \textbf{Output:} final cut $\vec x^*$.}
	\label{alg:procedure-cut}
	\begin{algorithmic}[1]
		\State Sort $\theta_i\in[0,2\pi)$, $i=1,2,\ldots,n$ in the ascending order.
		\State Let $\alpha=0$, $i=1$ and $\Gamma=-\infty$. Let $j$ be the smallest index such that $\theta_j>\pi$ if there is one; otherwise set $j=n+1$. Set $\theta_{n+1}=2\pi$.
		\While{$\alpha\leq \pi$}
		\State Generate cut $\vec x$ by
		\begin{equation}
			x_i=\left\{\begin{array}{cc}
				+1, & \text { if } \theta_i \in[\alpha, \alpha+\pi), \\
				-1, & \text { otherwise, }
			\end{array}\right.
		\end{equation}
		and compute $F(\vec x)$.
		\If{$F(\vec x)>\Gamma$}
		\State let $\Gamma=F(\vec x)$, $\vec x^*=\vec x$.
		\EndIf
		\If{$\theta_i\leq \theta_j-\pi$}
		\State let $\alpha=\theta_i$ and increment $i$ by 1;
		\Else
		\State let $\alpha=\theta_j-\pi$ and increment $j$ by 1.
		\EndIf
		\EndWhile
	\end{algorithmic}
\end{algorithm}


%
%
%
%

          \end{document}